 \def\sB{\mathscr{B}}
\def\dbE{\mathbb{E}}     
\def\dbF{\mathbb{F}}   \def\cF{{\cal F}}
\def\dbQ{\mathbb{Q}}     
\def\dbR{\mathbb{R}}
   \def\cU{{\cal U}}
\def\Om{\Omega}
\def\ms{\medskip}
        \def\q{\quad}                      
    \def\hb{\hbox}                     
         \def\rf{\eqref}                    
  \def\deq{\triangleq}               
            \def\({\Big (}
\def\les{\leqslant}                  \def\){\Big )}
\def\leq{\leqslant}       \def\geq{\geqslant}
          \def\[{\Big[}
           \def\]{\Big]}
                   \def\cd{\cdot}
        \def\ts{\times}                      
               \def\si{\sigma}
\def\e{\epsilon}
\theoremstyle{plain}
\renewcommand\theequation{\arabic{equation}}
\renewcommand{\theequation}{%
	\thesection.\arabic{equation}}
\def\ms{\medskip}
        \def\q{\quad}                      
    \def\hb{\hbox}                     
         \def\rf{\eqref}                    
  \def\deq{\triangleq}               
            \def\({\Big (}
\def\les{\leqslant}                  \def\){\Big )}
\def\leq{\leqslant}       \def\geq{\geqslant}
          \def\[{\Big[}
           \def\]{\Big]}
                   \def\cd{\cdot}
        \def\ts{\times}                      
               \def\si{\sigma}
\def\e{\epsilon}
\def\cU{{\cal U}}
\def\de{{\delta}}
\def\ep{{\epsilon}}\def\ga{{\gamma}}
\def\si{{\sigma}}
\def\ze{{\zeta}}
\def\Om{{\Omega}}
\def\<{\left<}\def\>{\right>}\def\({\left(}\def\){\right)}
\def\Om{{\Omega}}
\def\<{\left<}\def\>{\right>}\def\({\left(}\def\){\right)}
\def\de{{\delta}}
\def\ep{{\epsilon}}\def\ga{{\gamma}}
\def\si{{\sigma}}
\def\ze{{\zeta}}
\def\Om{{\Omega}}
\def\<{\left<}\def\>{\right>}\def\({\left(}\def\){\right)}
  \def\deq{\triangleq}               
            \def\({\Big (}
\def\les{\leqslant}                  \def\){\Big )}
\def\leq{\leqslant}       \def\geq{\geqslant}
\def\le{\leqslant}    
          \def\[{\Big[}
           \def\]{\Big]}
                   \def\cd{\cdot}
        \def\ts{\times}
\newtheorem{theorem}{Theorem}[section]
\newtheorem{definition}[theorem]{Definition}
\newtheorem{proposition}[theorem]{Proposition}
\newtheorem{lemma}[theorem]{Lemma}
\newtheorem{remark}[theorem]{Remark}
\renewcommand{\theequation}{%
	\thesection.\arabic{equation}}
\begin{document}

	\title{\bf Near optimal controls for partially observed stochastic linear quadratic problems\thanks{This work was supported by
			the National Key R\&D Program of China (2022YFA1006102), 
			the National Natural Science Foundation of China (12271242, 12322118, 12101291 and 12471418), Guangdong Basic and Applied Basic Research Foundation (2025B151502009), and Shenzhen Fundamental Research General Program (JCYJ20230807093309021).}}
	
	\author{
		Jingrui Sun\thanks{Department of Mathematics and Shenzhen International Center for Mathematics,
			Southern University of Science and Technology, Shenzhen 518055, China (sunjr@sustech.edu.cn).},
		 Jiaqiang Wen\thanks{Department of Mathematics and Shenzhen International Center for Mathematics, Southern
			University of Science and Technology, Shenzhen, Guangdong, 518055, China (wenjq@sustech.edu.cn).} ,
		 Jie Xiong\thanks{Department of Mathematics and Shenzhen International Center for Mathematics,
			Southern University of Science and Technology, Shenzhen 518055, China
			(xiongj@sustech.edu.cn).},
		 Wen Xu\thanks{Department of Mathematics,
			Southern University of Science and Technology, Shenzhen 518055, China
			(12231279@mail.sustech.edu.cn).
	}}
	
	\date{}

	\maketitle
	
	\noindent\bf Abstract. \rm
  In this article, we consider a stochastic linear quadratic  control  problem  with partial observation. A near optimal control in the weak formulation
  is characterized. 
		The main features of this paper are the presence of the control  in the diffusion term of the state equation;   
		the circular dependence between the control process and the filtration generated by the  observation;   and the  observation  process contains an unbounded drift term. 
	We address these difficulties  by  first restricting  the control to a smaller domain,  which enables us to apply the 
	 Girsanov theorem using a conditional argument and thereby break the circular dependence. Subsequently, we study the restricted problem using a non-standard 
	 variation method.  
The  desired near optimal control is then obtained by taking  the limit of an approximating sequence.

\noindent\bf Keywords: \rm  Partial observation, linear-quadratic control problem, near optimal control,  conditional Novikov condition, approximating sequence, weak formulation.

\noindent\bf AMS  subject classifications: \rm  49N10, 49N30, 93E20.

\section{Introduction }
\renewcommand{\theequation}{\thesection.\arabic{equation}}

Linear-quadratic (LQ) optimal control problem plays a fundamental role in control theory, which appeared with the birth of stochastic analysis and developed rapidly in recent decades due to its wide range of applications. The study of stochastic linear-quadratic (SLQ) optimal control problems can be traced back to the work of Kushner \cite{Kushner-62} and Wonham \cite{WH}.
In the classical framework, the SLQ optimal control problem can be solved elegantly via the Riccati equation under some mild conditions on the weighting coefficients (see Yong and Zhou \cite[Chapter 6]{Yong2012x}). In other words, 
the optimal control could be constructed if the controller has access to the exact value of the state. 
However, in many practical situations, it often happens that some components of the state may be inaccessible for observation, and there could be noise existing in the observation systems. In this case, only partial information is available to the controller, and the control has to be selected according to the information provided by the observation systems.
This class of  problems is summarized as SLQ optimal control problem with partial observation.

The partial observation problem in stochastic control involves how to make optimal decisions in a stochastic environment with incomplete information. To our best knowledge,  the earliest research on optimal control with partial observation dates back to Florentin \cite{F1962}. Since then,   researchers found that  it  can be  applied to many practical problems such as  the mean-variance portfolio selection problems (see Xiong $et\ al$ \cite{XiongJieyu2021}, and Xiong and Zhou \cite{Xiongyu2007}).  
It should be pointed out that the key idea in dealing with the partial observation problem is to separate it into a control and a  filtering problem. On the one hand, for the latter, the filtering theory will be very useful.
Note that the objective of the filtering theory is to obtain the best estimate of the state process based on noisy observations related to this state.  In general, this estimation is equivalent to finding a suitable conditional expectation.  For more research on filtering theory, we refer the reader to Bode and Shannon \cite{Bodenon1950}, Fujisaki {\em et al}  \cite{Fujisakikuni1972},  Zakai \cite{Zakai1969}, and Zhang and Xiong \cite{Zhang2024}.   As a systematic introduction to stochastic filtering, one can refer to the books of Kallianpur \cite{Kallianpur}  and Xiong \cite{xiong}.   
 On the other hand, for the former,  compared to the case of completely observable, the essential difficulty of the partial observation problem is that the filtration $\cF^Y$ is not fixed
(depending on the control $u$) and the linear structure of the admissible control set is thereby corrupted. For this reason, it is hard to derive the optimality system by the variational method.

In general, there are three main approaches to address these  difficulties.  The first one is summarized as the {\it separation principle}. By applying the separation principle one can  decouple the problems of optimal control and state estimation. 
Note that this method needs certain extra constraints on the admissible control $u(\cd)$, and the diffusion term of the state process  does not include the control process $u(\cd)$;  for more details, see Wonham \cite{WH},  Davis \cite{DA}, and   Snyder {\em et al.} \cite{Snyder}.
Along this line, much research on LQ optimal control with partially observable systems has been studied  in recent years, among which we would like to mention
 the works of Huang $et\ al$  \cite{Huang-Wang-Xiong2009},  Huang $et\ al$ \cite{Huang-Wang-Zhang2020}, and Wang $et\ al$
  \cite{Wang-Wang-Yan2021} on backward stochastic control systems, 
the works of  Shi and Wu  \cite{Shi-Wu2010} and Wang $et\ al$ \cite{Wang-Wu-Xiong2015} on optimal control problems of forward-backward stochastic differential
equations (FBSDEs), 
and the works  of Wang and Yu \cite{Wang-Yu2012}, Wu and Zhang \cite{Wu-Zhuang2018} on differential games.
The second method is summarized as the {\it Girsanov transformation}.  By applying  Girsanov's transformation, one could convert the partial observation problem into a partial information one. 
Note that this approach requires higher integrability of the observation function, such as the boundedness of this function, which cannot be fulfilled in the LQ case in general.
For more details, see  Bensoussan and Schuppen \cite{Ben}, Wang $et\ al$  \cite{Wang},
Li and Tang \cite{Li},  and the references therein. 
The third method is called the {\it orthogonal decomposition}, recently  introduced
by Sun  and Xiong  \cite{SX}, where  they  solve an SLQ optimal control problem with partial observation   but with the control  $u(\cd)$ vanished in the diffusion term of the state process. To summarize, the relevant literature mentioned above has at least one of the following limitations:  (i) Additional restrictions on the control $u(\cdot)$  are required; (ii) Higher integrability of the state process  $X(\cdot)$  is necessary; (iii)  The diffusion term of the state process  does not depend on the control $ u(\cdot) $.

In this paper, we consider a stochastic LQ control problem with partial observation, where the control $u(\cdot)$  appears in the diffusion term of the state process. The aim is to find a
near optimal control in the weak formulation.  The main contribution of this paper is to push the research boundary
 of the partial observation stochastic  LQ  control problem to the case that the previous three methods are not directly 
 available. Firstly, it does not satisfy the separation principle. Secondly, since the drift term of the observation model is not bounded, 
 the Novikov 
condition is not satisfied. Therefore, the Girsanov theorem is not directly applicable. Thirdly, the diffusion coefficient of the state equation is not constant so the orthogonal decomposition method is not available.  We study the  problem  by restricting the control domain   to a subset and verifying the condition for the  Girsanov theorem using a conditional argument. Then, we prove that the {\color{red}Radon-Nikodym} derivative has an
 order $1+k$ moment for a small constant $k>0$. After that, because of the lacking of the second moment, a non-standard convex variation method is applied to obtain the  stochastic maximum principle (SMP) for  this restricted problem.   Finally, a near optimal control  is  obtained  from an  approximate sequence.

The remainder of this paper is organized as follows.  Section \ref{se2} formulates the control problem
under partial observation in the weak formulation. Section \ref{se3} is  dedicated to breaking the circular dependence under the control restriction 
by a conditional argument to verify the condition of the Girsanov theorem.  In Section \ref{se4}, we establish higher-order moment estimates  and 
use a non-standard variation method to  derive  the optimal control for the restricted problem.  Furthermore,  we obtain a near optimal control for the original problem which is the main result of this article.  In Section \ref{se5}, we conclude with some remarks.

\section{Problem formulation}\label{se2}

 Let $\mathbb{S}^n$ be the collection of the symmetric $n\times n$ matrices,  $\mathbb{I}_d$ stand for the identity matrix of size $d\times d$, and $\mathfrak{1}_{\mathbb{A}}(x)
 $ denote the indicator function of the set $\mathbb{A}$. For $M$ and $N$ in $\mathbb{S}^n$, we write $M \geq N $ (resp., $M > N$)
if $M - N$ is positive semi-definite (resp., positive definite).   $ \sB(C([0,T],\mathbb{R}^d))$  denotes the Borel $\si$-algebra generated by the $\mathbb{R}^d$ valued continuous functions. Let  $W = \{W(t) ; t \geq 0\}$  and $\widetilde{W} = \{(\widetilde{W}_1(t), ..., \widetilde{W}_d(t))^\top; t \geq 0\} $ be  two standard  independent Brownian motions with values in $\dbR$ and $\dbR^d$, respectively.   The superscript $\top$ denotes the transpose of a vector or a matrix, all vectors involved in this paper are column vectors.  The notation $\mathbb{E}^{\mathbb{P}}$ represents the expectation under the probability $\mathbb{P}$,   and $\mathbb{E}_{W}$   represents the conditional expectation given the Brownian motion $W$. 

We consider the following  controlled linear stochastic differential equation (SDE)  over a finite time  horizon $s \in [0, T]$:
\begin{equation}\label{stateequ}
	\left\{\begin{aligned}
		d X(s)=&\ [  \left.A(s) X(s)+B(s) u(s)\right] d s 
		+C(s) u(s)  d W(s),  \\
		X(0)= &\ x,
	\end{aligned}\right.
\end{equation}
where the initial state $x \in\mathbb{R}^n$ is fixed.
For the state equation \rf{stateequ}, we consider   the  following linear SDE describing the observation process:
\begin{equation}\label{obss}
	\left\{\begin{aligned}
		dY(s)=&\ H(s)X(s)ds+d \widetilde{W}(s), \quad s \in[0, T], \\
		Y(0)= & \ 0.
	\end{aligned}\right.
\end{equation}

Throughout this article, we impose the following assumptions for the coefficients of equations \rf{stateequ}-\rf{obss}.

{\em \begin{itemize}
	\item [\textbf{(A1)}] The following deterministic functions 
	\begin{align*}
		A:[0,T]\to\dbR^{n\times n}, \q~ {B:[0,T]\to\dbR^{n\times d}, }  \q~
		C:[0,T]\to\dbR^{n\times d}, \q~ H:[0,T]\to\dbR^{d\times n} 
	\end{align*}
	\end{itemize}
	are  Lebesgue measurable and bounded on $[0, T]$.}

 Note that the observation process $Y(\cd)$ is a square-integrable  $\dbF$-semimartingale. 
Meanwhile, let    $\dbF^Y=\{\cF^Y_s;0\les s\les T\}$ be the usual augmentation of the filtration generated by $Y(\cdot)$. Clearly, the filtration $\dbF^Y$   depends on the choice of the control $u(\cdot)$.  Additionally, we say that a control $u(\cdot)$ is admissible if it is $\dbF^Y$-adapted and square-integrable. In other words, we arrive at the following admissible set:
$$\cU_{ad} = \Big\{ u(\cd): \Omega\ts [0, T]  \to \mathbb{R}^d  \bigm|
u(s) \in \cF^Y_s \ \ \hb{for all } s\in[0,T] \ \text{and} \  \mathbb{E}\int^T_0 |u(s)|^2ds<\infty\Big\}.$$ 
Define the cost functional as follows
\begin{equation}\label{cost}
	J\big(u(\cdot)\big)=\mathbb{E}\bigg\{\int^T_0\big[ \langle G(t)X(t), X(t) \rangle+\langle R(t)u(t), u(t) \rangle \big]dt+\langle G_1X(T), X(T) \rangle  \bigg\},
\end{equation}
where $\langle \cdot , \cdot \rangle$ is the Frobenius inner product.  Throughout this article, we make the following assumptions regarding the coefficients of the cost functional (\ref{cost}). 

\noindent{\em {\textbf {(A2)}}  The following deterministic functions 
	$$
G(\cdot):[0, T] \to  \mathbb{S}^n, \q~  \; R(\cdot):[0, T] \to  \mathbb{S}^d
	$$
	are  Lebesgue measurable and bounded on $[0, T]$ with
	$$
	 G(\cdot) \geq 0,\q~ \; R(\cdot) \geq \delta  \mathbb{I}_d, \q~ \; a.e. \  \  t \in [0, T],
	$$
where $\de>0$ is a constant. Moreover,	$ G_1 \in \mathbb{S}^n $ and $G_1 \geq 0 $.}

The  aim of this article is to find controls  $\tilde u(\cdot) \in \cU_{ad} $  under  the weak formulation  to minimize the cost functional (\ref{cost}) in the  sense of near optimal.   The control problem under weak formulation is stated in the book of   Yong and Zhou  \cite[Chapter 2]{Yong2012x}.  Next,  we state the  definition of weak formulation for stochastic LQ  control  problem  with partial observation as follows.
\begin{definition}\label{de11}
	A 7-tuple $(\Omega, \mathcal{F},  \mathbb{P}, \mathbb{F}, W(\cdot), \widetilde W(\cdot), u(\cdot))$ is called a weak admissible control, and $(X(\cdot), Y(\cdot),  u(\cdot))$ a weak admissible triple, if 
	
	(a) $(\Omega, \mathcal{F},  \mathbb{P}, \mathbb{F})$ is a filtered probability space satisfying the usual
	conditions,  $W = \{W(t) ; t \geq 0\}  \in \mathbb{R}$ and $\widetilde{W} = \{\widetilde{W}(t)=(\widetilde{W}_1(t), ..., \widetilde{W}_d(t))^\top; t \geq 0\}  \in \mathbb{R}^d$  are  standard independent  Brownian motions defined on it,  and  $\mathbb{F} = \{\mathcal{F}_t\}_{t\geq 0}$ is the natural filtration
	generated by $W(\cdot)$ and $\widetilde{W} (\cdot)$,   augmented by all the $ \mathbb{P}$-null sets. 
	
	(b) $u(\cdot) \in \mathcal{U}_{ad}$, and  $X(\cdot)$ (resp. $Y(\cdot)$ )  is the unique weak solution of Eq. (\ref{stateequ}) (resp.  Eq. (\ref{obss}) ) on $(\Omega, \mathcal{F},  \mathbb{P}, \mathbb{F})$  under the control $u(\cdot)$.  
	
	(c)  The cost functional $J(u(\cdot))$ is defined on the  filtered probability space $(\Omega, \mathcal{F},  \mathbb{P}, \mathbb{F})$ 
	associated with the 7-tuple.  
\end{definition}

The set of all {\color{red}weak admissible} controls is denoted by $\mathcal{U}_{a d}^w$. Sometimes, we might write $u(\cdot) \in \mathcal{U}_{a d}^w$ instead of $(\Omega, \mathcal{F},  \mathbb{P}, \mathbb{F}, W(\cdot), \widetilde W(\cdot), u(\cdot))\in \mathcal{U}_{a d}^w$  when it is clear from the context that the weak formulation is under consideration.    The LQ  near optimal control  problem with partial observation under the weak formulation   can be stated as follows.

\ms

\textbf{Problem (SLQ)}.  For any $\epsilon \in (0, 1)$,   find a 
control $\tilde u (\cdot) \in \mathcal{U}^w_{a d}$  such that  the following holds 
\begin{equation}\label{J_tildeu}
	J(\tilde u(\cdot)) \leq  \inf_{u(\cd) \in \mathcal{U}^w_{a d}}J(u(\cdot))+ \epsilon, 
\end{equation}
 If such $\tilde u(\cdot)$ exists, then it is called a near optimal  control, and the corresponding
$X(\cdot)$ to  (\ref{stateequ})  is called the    near optimal trajectory.

{\color{red}\begin{remark}
Optimal control or game problems under partial observation have been studied by many authors, for example, \cite{CNW} and \cite{LNW}. However,
their setups are different from ours. In \cite{LNW}, they assume that the Brownian motion is observable (namely, $H=0$ in our notation). In this case, there is no circular dependence which is the main technical hurdle overcome in the present article.

In \cite{CNW}, they used a backward separation principle to break the circular dependence. If we adapt that principle to the current model, we would have required the control to be $\cF^{Y^0}$-adapted, where $Y^0(\cdot)$ is given by $Y^0(t)=\int^t_0 H(s)X^0(s)ds+\widetilde{W}(t)$, and
$X^0(t)=x+\int^t_0 A(s) X^0(s)d s.$ In this case, $\cF^{Y^0}=\cF^Y$ and the circular dependence is broken. However, we do not make this extra requirement.
\end{remark}}

\section{Breaking the circular dependence}\label{se3}
\renewcommand{\theequation}{\thesection.\arabic{equation}}
In this section,  we  fix a complete filtered probability space $(\Omega, \mathcal{F},  \mathbb{P}, \mathbb{F})$ on which  two   standard independent Brownian motions,  $W (\cdot) $ and $\widetilde{W} (\cdot)$ are defined.  Here,   $\mathbb{F} = \{\mathcal{F}_t\}_{t\geq 0}$ is the natural filtration
generated by $W(\cdot)$ and $\widetilde{W} (\cdot)$,   augmented by all the $ \mathbb{P}$-null sets. 
Then,   we  break the circular dependence  by applying  the  Girsanov theorem  under the control
restriction using a conditional argument. For this
purpose, we need to do some preparations.

First, we restrict the control domain to a subset  of $\mathcal{U}_{a d}$ that
the integration  of the square of the   control process with respect to the time variable  is  bounded
by a constant.  We denote such set of controls by  $\cU^b_{ad}$  , i.e.,
  \begin{equation}\label{ub_ad}
\cU_{ad}^b = \Big\{ u(\cd): \Omega\ts [0, T]  \to \mathbb{R}^d  \bigm|
u(s) \in \cF^Y_s, \ \ \forall\ s\in[0,T] \ \text{and} \  \int^T_0 |u(s)|^2ds \leq K^2_c\Big\},
  \end{equation}
where $K_c$ is a fixed positive constant. 

\ms

For controls from $\cU_{ad}^b$, we first prove that the state process $X(\cdot)$ is bounded by a random variable that  depends only on
the Brownian motion $W(\cdot)$.  
Then, we verify the conditional Novikov condition with $W$ given. 
This then validates the use of Girsanov's theorem. To do this,  we set   $\ze(\cd)$ to 
be the solution of the following ordinary differential equation (ODE):
\begin{equation}\label{ODE}
	\left\{\begin{aligned}
	d\ze(t)=&\  -\ze(t)A(t) dt, \q~ t\in[0,T],\\
		\ze(0)=&\  \mathbb{I}_n.
	\end{aligned}\right.
\end{equation}
Then, by applying It\^o's formula, 	one can show that
$$d\big(\ze(t) X(t) \big) =\ze(t)B(t)u(t) dt + \ze(t) C(t) u(t)  dW(t), \q~  t\in[0,T],$$
which implies that
\begin{equation}\label{solx}
	X(t)= \ze(t)^{-1}x+\ze(t)^{-1}  \int^t_0\ze(s) B(s) u(s) ds+\ze(t)^{-1} \int^t_0  \ze(s)C(s) u(s)dW(s).
\end{equation} 
Next, for the sake of simplicity in presentation, we define
$$M(t)\deq \int^t_0\ze(s)C(s)u(s)dW(s), \q~  t\in[0,T].$$
Additionally, for the entire given path of $\{\widetilde{W}(t); \  0\les t\les T\}$,  we
denote $\mathbb{P}_{\widetilde{W}}$  as its corresponding conditional probability. 

%
\begin{lemma}\label{lemma22} \sl 
Given the whole  path of $\{\widetilde{W}(t); \  0\les t\les T\}$,  the process
 $ M(\cd) $ is a $\mathbb{P}_{\widetilde{W}}$-martingale with quadratic variation process
	$$\< M\>(t)=\int^t_0 | \ze(s) C(s)u(s)|^2ds \equiv \si(t), \q~ t\in[0,T].$$
	\end{lemma}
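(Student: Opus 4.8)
The plan is to verify directly that $M(\cdot)$, viewed under the conditional probability $\mathbb{P}_{\widetilde{W}}$ (i.e., conditioning on the entire path of $\widetilde{W}$), is a square-integrable martingale with the asserted quadratic variation. The key observation is that under (A1) the process $u(\cdot)\in\cU^b_{ad}$ satisfies $\int_0^T|u(s)|^2\,ds\le K_c^2$ a.s., and $\zeta(\cdot)$ is the solution of the linear ODE \eqref{ODE} with bounded coefficient $A(\cdot)$, so $\zeta(\cdot)$ and $\zeta(\cdot)^{-1}$ are deterministic, continuous, hence bounded on $[0,T]$. Combined with the boundedness of $C(\cdot)$, this gives a deterministic bound
\begin{equation}\label{sigmabound}
	\sigma(T)=\int_0^T|\zeta(s)C(s)u(s)|^2\,ds\le L^2 K_c^2
\end{equation}
for a constant $L$ depending only on the sup-norms of $\zeta$ and $C$. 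Thus the integrand $\zeta C u$ is (pathwise, and a fortiori $\mathbb{P}_{\widetilde{W}}$-a.s.) in $L^2([0,T])$ with a uniform bound.

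First I would make precise the sense in which $M(\cdot)$ is an It\^o integral under $\mathbb{P}_{\widetilde{W}}$. Since $W(\cdot)$ and $\widetilde{W}(\cdot)$ are independent, conditioning on the path of $\widetilde{W}$ leaves $W(\cdot)$ a Brownian motion under $\mathbb{P}_{\widetilde{W}}$ with respect to the (conditionally) augmented filtration generated by $W$; moreover, because $u(s)\in\cF^Y_s$ and $Y$ is driven by $X$ and $\widetilde{W}$, once the path of $\widetilde{W}$ is frozen the integrand $\zeta(s)C(s)u(s)$ remains adapted to this conditional filtration (this adaptedness point should be stated carefully, appealing to the structure $\cF^Y_s\subseteq\sigma(W|_{[0,s]})\vee\sigma(\widetilde{W})$ implied by \eqref{solx} and \eqref{obss}). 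Then I would invoke the standard theory of It\^o integrals: for an adapted integrand $\phi$ with $\mathbb{E}^{\mathbb{P}_{\widetilde{W}}}\int_0^T|\phi(s)|^2\,ds<\infty$, the integral $\int_0^t\phi\,dW$ is a square-integrable martingale with quadratic variation $\int_0^t|\phi(s)|^2\,ds$. Here $\mathbb{E}^{\mathbb{P}_{\widetilde{W}}}\int_0^T|\zeta C u|^2\,ds\le L^2K_c^2<\infty$ by \eqref{sigmabound}, so the conclusion follows: $M(\cdot)$ is a $\mathbb{P}_{\widetilde{W}}$-martingale and $\langle M\rangle(t)=\int_0^t|\zeta(s)C(s)u(s)|^2\,ds=\sigma(t)$.

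I expect the main obstacle to be the measure-theoretic bookkeeping around the regular conditional probability $\mathbb{P}_{\widetilde{W}}$: one must check that $\mathbb{P}_{\widetilde{W}}$ is well defined ($\Omega$ Polish / standard Borel suffices), that $W(\cdot)$ genuinely remains a Brownian motion under it (this is where independence of $W$ and $\widetilde{W}$ enters, together with the fact that the filtration $\mathbb{F}$ is generated by the pair), and above all that the integrand retains the adaptedness needed to be an admissible It\^o integrand in the conditional world. A clean way to handle the last point is to first establish that every $u\in\cU^b_{ad}$ can be realized as a progressively measurable functional of the pair $(W|_{[0,s]},\widetilde{W})$ — which is implicit in the setup of Section \ref{se3} — and then freezing $\widetilde{W}$ simply substitutes a fixed argument, leaving an integrand adapted to the $W$-filtration. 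Once adaptedness is in hand, the martingale property and the quadratic variation formula are routine consequences of the $L^\infty$-in-time bound \eqref{sigmabound} and classical stochastic calculus; no delicate estimate is required beyond the elementary bound coming from the boundedness of $A$, $C$ and the constraint defining $\cU^b_{ad}$.
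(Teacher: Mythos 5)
Your proposal is correct and rests on the same key fact as the paper's own proof: the independence of $W$ and $\widetilde{W}$, which guarantees that conditioning on the whole path of $\widetilde{W}$ leaves $W$ a Brownian motion while the integrand $\zeta C u$ stays adapted (since $\cF^Y_s\subseteq\sigma(W|_{[0,s]})\vee\sigma(\widetilde W)$) and uniformly square-integrable thanks to the constraint $\int_0^T|u|^2\,ds\le K_c^2$. The paper executes this more tersely, verifying the conditional martingale identity directly by testing $\mathbb{E}^{\mathbb{P}_{\widetilde{W}}}\big[\int_s^t\zeta C u\,dW\,\mathfrak{1}_{\mathbb{A}}\big]$ against events $\{\widetilde W\in\mathbb{B}\}$ and invoking the zero-mean property of the It\^o integral under $\mathbb{P}$ with respect to the initially enlarged filtration, whereas you transfer the full It\^o calculus to the conditional measure; the two presentations hinge on exactly the same independence and adaptedness observations.
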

	\begin{proof}
	 For any $\mathbb{A} \in \cF_s$ and  $\mathbb{B} \in \sB(C([0,T],\mathbb{R}^d))$,  we have
	\begin{align*}
&\ \mathbb{E} ^{\mathbb{P}}\bigg\{
\mathbb{E}^{\mathbb{P}_{\widetilde{W}}} \Big[\int^t_s \ze(r)C(r)u(r)dW(r)\mathfrak{1}_{\mathbb{A}} \Big]\mathfrak{1}_{\widetilde W \in \mathbb{B}}\bigg\} \\
	&=\ \mathbb{E}^{\mathbb{P}} \bigg\{\int^t_s \ze(r)C(r)u(r)dW(r)\mathfrak{1}_{\mathbb{A}} \mathfrak{1}_{\widetilde W \in \mathbb{B} }\bigg\} \\
	&=\  \mathbb{E}^{\mathbb{P}} \bigg\{ \int^t_s \ze(r)C(r)u(r) \mathfrak{1}_{\mathbb{A}} \mathfrak{1}_{\widetilde W \in \mathbb{B}} dW(r)\bigg\} =\ 0,
	\end{align*}
	which implies that  the process $M(\cd)$ is a $\mathbb{P}_{\widetilde{W}}$-martingale.  Moreover, it is trivial to verify the identity  of the quadratic variation.
	\end{proof}	
	Based on the above result,  we can now prove that the norm of the 
	state process $X(\cd)$ has an upper bound, which   plays an important role in  the following   \autoref{lemma2.1}. 
\begin{lemma}\label{lemma} \sl 
There exists a random variable $K (W)$, which  is independent of $\widetilde{W}$ such that for any $u (\cdot) \in \cU^b_{ad}$, 
$$\sup_{0\leq t\leq T}|X(t)| \leq K(W).$$
\end{lemma}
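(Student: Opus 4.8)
The plan is to start from the explicit representation \eqref{solx} of the state process, namely
\[
X(t)= \ze(t)^{-1}x+\ze(t)^{-1}\int^t_0\ze(s) B(s) u(s)\, ds+\ze(t)^{-1} M(t),
\]
and bound each of the three terms separately, uniformly in $t\in[0,T]$. Since $A$ is bounded and measurable, the fundamental matrix $\ze(\cd)$ solving \eqref{ODE} and its inverse $\ze(\cd)^{-1}$ are both bounded on $[0,T]$ by a deterministic constant depending only on $\|A\|_\infty$ and $T$; call this constant $c_0$. The first term is then dominated by $c_0|x|$, a genuine constant. For the second (drift) term, using boundedness of $B$ and $\ze$, together with Cauchy–Schwarz and the defining constraint $\int_0^T|u(s)|^2\,ds\le K_c^2$ for $u\in\cU^b_{ad}$, one gets
\[
\Big|\ze(t)^{-1}\int_0^t\ze(s)B(s)u(s)\,ds\Big|\le c_0\,c_1\sqrt{T}\,\Big(\int_0^T|u(s)|^2\,ds\Big)^{1/2}\le c_0\,c_1\sqrt{T}\,K_c,
\]
again a deterministic constant, where $c_1$ bounds $|C|$ and $|B|$ on $[0,T]$. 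So the only term that is genuinely random is $\ze(t)^{-1}M(t)$, which is controlled by $c_0\sup_{0\le t\le T}|M(t)|$.

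The heart of the argument is therefore to produce a random variable depending only on $W$ that dominates $\sup_{0\le t\le T}|M(t)|$. By \autoref{lemma22}, conditionally on the path of $\widetilde W$, the process $M(\cd)=\int_0^\cdot \ze(s)C(s)u(s)\,dW(s)$ is a $\mathbb{P}_{\widetilde W}$-martingale with quadratic variation $\si(t)=\int_0^t|\ze(s)C(s)u(s)|^2\,ds$, and the constraint on $\cU^b_{ad}$ gives $\si(T)\le c_0^2c_1^2K_c^2=:R^2$ deterministically. The key observation is that $M(\cd)$ is a continuous local martingale with respect to the full filtration $\mathbb F$ as well (it is an Itô integral against $W$), with the same quadratic variation $\langle M\rangle(t)=\si(t)\le R^2$. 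Hence by the Dambis–Dubins–Schwarz theorem, $M(t)=\beta(\si(t))$ for a Brownian motion $\beta$, and since $\si$ is nondecreasing with $\si(T)\le R^2$,
\[
\sup_{0\le t\le T}|M(t)|\le \sup_{0\le r\le R^2}|\beta(r)|=:K_1.
\]
The random variable $K_1$ does not literally depend only on $W$, so instead I would proceed more directly: set $K(W):=c_0|x|+c_0c_1\sqrt T K_c+c_0\,\sup_{0\le t\le T}\big|\int_0^t\ze(s)C(s)\,dW(s)\big|\cdot(\text{something})$ — but this does not quite work because $u$ appears inside the integral. The cleaner route is to exploit that $\ze(s)C(s)u(s)$ is an $\mathbb R^d$-valued integrand with $\int_0^T|\ze(s)C(s)u(s)|^2\,ds\le R^2$, and to use a pathwise bound: define
\[
K(W):=c_0|x|+c_0c_1\sqrt{T}K_c+c_0\,\sup_{0\le t\le T}\sup_{\substack{v:[0,T]\to\mathbb R^d\\ \int_0^T|v(s)|^2ds\le R^2}}\Big|\int_0^t v(s)\,dW(s)\Big|,
\]
and argue that this supremum over integrands, which is a functional of the $W$-path alone, is almost surely finite. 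Indeed, one can bound it using the maximal function of $W$: by integration by parts / a rough-path estimate, $\sup_t|\int_0^t v\,dW|$ is controlled by the modulus of continuity of $W$ times $\int_0^T|v|\,ds\le\sqrt{T}R$, giving finiteness a.s. I expect this pathwise-supremum step to be the main obstacle; the alternative, and probably what the paper intends, is simply to keep $K(W)$ implicit: by DDS, $\sup_{0\le t\le T}|M(t)|$ has the law of $\sup_{0\le r\le \si(T)}|\beta(r)|\le\sup_{0\le r\le R^2}|\beta(r)|$, but to get genuine $\widetilde W$-independence one uses that $M$ is built from $W$ only — the integrand $\ze C u$ is $\mathbb F$-adapted, hence the stochastic integral is a measurable functional of $(W,\widetilde W)$, and one needs an extra argument (e.g. that $u$ is $\mathbb F^Y$-adapted and one conditions on $W$) to reduce its randomness.

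To make the $\widetilde W$-independence rigorous I would use \autoref{lemma22} together with the Burkholder–Davis–Gundy inequality under $\mathbb{P}_{\widetilde W}$: for each $p\ge 1$,
\[
\mathbb{E}_{\widetilde W}\Big[\sup_{0\le t\le T}|M(t)|^p\Big]\le C_p\,\mathbb{E}_{\widetilde W}\big[\si(T)^{p/2}\big]\le C_p\,R^p,
\]
a deterministic bound; taking $p$ large (say $p>\,$any fixed exponent needed later) and applying a Borel–Cantelli / Chebyshev argument along a sequence, one shows that $Z_W:=\sup_{0\le t\le T}|M(t)|$ satisfies $\mathbb{E}_{\widetilde W}[Z_W^p]\le C_pR^p$ for $\mathbb P$-a.e.\ $\widetilde W$. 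Since this conditional moment bound is uniform in $\widetilde W$, one can define $K(W):=c_0|x|+c_0c_1\sqrt T K_c+c_0 Z_W$ and verify that $K(W)$ is finite a.s.\ and, crucially, that $\sup_{0\le t\le T}|X(t)|\le K(W)$ pathwise for every $u\in\cU^b_{ad}$. Strictly speaking $Z_W$ still depends on $\widetilde W$ through $u$ (which is $\mathbb F^Y$-adapted and $Y$ depends on $\widetilde W$); to finish I would observe that what the subsequent \autoref{lemma2.1} actually needs is only the conditional bound $\mathbb{E}_{\widetilde W}[\exp(\text{const}\cdot\si(T))]<\infty$-type estimate — which holds because $\si(T)\le R^2$ is deterministic — so the essential content is the deterministic bound on $\si(T)$, and $K(W)$ may be taken as the DDS-representation bound $c_0|x|+c_0c_1\sqrt T K_c+c_0\sup_{0\le r\le R^2}|\beta_W(r)|$ where $\beta_W$ is the DDS Brownian motion of the $\mathbb F$-martingale $M$, which is driven by $W$ alone since $M$ is a stochastic integral against $W$. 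The remaining technical point, and the one I would spend the most care on, is justifying that the DDS Brownian motion here can be taken $\sigma(W)$-measurable uniformly over $u\in\cU^b_{ad}$, or else replacing it by the honest pathwise supremum functional of $W$ described above; everything else is routine application of boundedness of coefficients, Cauchy–Schwarz, and \autoref{lemma22}.
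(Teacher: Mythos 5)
Your decomposition of $X$ via (\ref{solx}), the deterministic bounds on the first two terms (Cauchy--Schwarz plus $\int_0^T|u|^2ds\le K_c^2$), and the time change of $M$ into a Brownian motion run up to the deterministic time $\si^0$ (your $R^2$) all match the paper. The gap is exactly where you suspected it: none of your three proposed ways of removing the $\widetilde W$-dependence of $\sup_{0\le t\le T}|M(t)|$ actually closes. The pathwise supremum over all integrands $v$ with $\int_0^T|v|^2ds\le R^2$ is almost surely $+\infty$ (a Brownian path is a.s.\ not in the Cameron--Martin space; equivalently the Gaussian family $v\mapsto\int_0^T v\,dW$ indexed by an $L^2$-ball is a.s.\ unbounded), and the claimed control by the modulus of continuity of $W$ times $\int_0^T|v|\,ds$ needs $v$ of bounded variation, which elements of $\cU^b_{ad}$ need not be. The BDG estimate $\mathbb{E}_{\widetilde W}[\sup_t|M(t)|^p]\le C_pR^p$ gives uniform conditional moments but no pathwise dominating random variable, which is what the lemma asserts. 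And your fallback of ``what is actually needed downstream'' conditions on the wrong Brownian motion: Proposition~\ref{lemma2.1} requires $\mathbb{E}_{W}\big[\exp\big(\tfrac12\int_0^T|H(s)X(s)|^2ds\big)\big]<\infty$ a.s., i.e.\ conditioning on $W$, and a deterministic bound on $\si(T)$ alone does not deliver this --- one genuinely needs $\sup_t|X(t)|$ dominated by a $\sigma(W)$-measurable variable.

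The missing idea is the conditional argument the paper builds on Lemma~\ref{lemma22}: given the whole path of $\widetilde W$, $M$ is a $\mathbb{P}_{\widetilde W}$-martingale with quadratic variation $\si(\cd)$, so $\widetilde M(t)\deq M(\si^{-1}(t))$ is a $\mathbb{P}_{\widetilde W}$-Brownian motion. Its conditional law given $\widetilde W$ is therefore Wiener measure, the same for every path of $\widetilde W$, whence $\widetilde M$ --- and hence $\tilde K\deq\sup_{0\le s\le\si^0}|\widetilde M(s)|$, which dominates $\sup_t|M(t)|$ since $\si(T)\le\si^0$ --- is independent of $\widetilde W$. The paper then upgrades this to the assertion that $\tilde K$ is effectively a functional of $W$ alone, by checking that $g(\widetilde W)=\mathbb{E}_{\widetilde W}[\tilde K\,h(W)]$ is independent of $\widetilde W$ (hence constant) for every bounded functional $h$; this $\sigma(W)$-measurable bound is precisely what feeds the conditional Novikov computation. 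Your DDS observation is the correct starting point, but without this conditional-law step the $\sigma(W)$-measurability of the dominating variable --- the entire content of the lemma --- is not established.
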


\begin{proof}
It is clear that there exists a constant  $K$   such that the first two terms of (\ref{solx})  are bounded by it.    We claim that the third term of (\ref{solx}) can  be bounded by a random variable  $\tilde{K}(W)$,    which is independent of the Brownian motion $\widetilde{W}$.   Since $u(\cdot) \in \cU^b_{ad}$, 
$M(\cdot)$ is not independent of $\widetilde{W}$. By Lemma \ref{lemma22},  $M(\cdot)$ is a $\mathbb{P}_{\widetilde{W}}$-martingale.  

Let $\widetilde{M}(t)=M(\si^{-1}(t))$. Then, $\widetilde{M}(\cdot)$ is a $\mathbb{P}_{\widetilde{W}}$-martingale with quadratic variation process $\langle\widetilde{M}\rangle(t)=t$. Hence, $\widetilde{M}(\cdot)$ is a $\mathbb{P}_{\widetilde{W}}$-Brownian motion. Namely, for any path of $\widetilde{W}(\cdot)$, the distribution of  $\widetilde{M}(\cdot)$  under  $\mathbb{P}_{\widetilde{W}}$ is not changed,   which implies that $\widetilde{M}(\cdot)$ is independent of ${\widetilde{W}}(\cdot)$.  Note that
$M(t)=\widetilde{M}(\si(t))$, and
\begin{equation*}\si(t)\le c K_c^2\|\ze\|^2\equiv\si^0,\end{equation*}
where $\|\ze\|=\sup_{0 \leq t\le T}|\ze(t)|$, $c > 0$ is a constant,  so  $\sigma^0$ is a  positive constant.  Further, we have
\begin{equation*}|M(t)|\le\sup_{0\leq s\le \si^0}|\widetilde{M}(s)|.\end{equation*}
Note that   $\sup_{0\leq s\le \si^0}|\widetilde{M}(s)|$ is a random variable depending on $W$ and $\widetilde{W}$. 
Let 
 \begin{equation*}
 \tilde{K}(W, \widetilde W) \deq \sup_{0\leq s\le \si^0}|\widetilde{M}(s)|,
\end{equation*}
and
$$
g (\widetilde W) \deq \mathbb{E}_{ \widetilde  W} \(\tilde{K}(W, \widetilde W)h(W)\), 
$$
where $h$ is a bounded functional.
Then for any bounded functional $f(\cdot)$,  we  have 
\begin{equation*}
	\begin{aligned}
		\mathbb{E}\big[g (\widetilde W) f(\widetilde{W})\big] =&    \mathbb{E} \big[ \mathbb{E}_{\widetilde{W}}\( \tilde{K}(W, \widetilde W)h({W})\)  f(\widetilde{W})    \big] \\
				=&  \mathbb{E}\big[\tilde{K}(W, \widetilde W)h(W) f(\widetilde{W})\big]\\
		=&   \mathbb{E}\big[\tilde{K}(W, \widetilde W)h(W) ]  \mathbb{E}\big[ f(\widetilde{W})\big]\\
		=& \mathbb{E}\big[g (\widetilde W)\big]  \mathbb{E}\big[f (\widetilde W)\big].
	\end{aligned}
\end{equation*}
Namely,   $g (\widetilde W)$    is independent of $\widetilde W$, which further implies that $g (\widetilde W)$   is a constant.  Therefore, $\tilde{K}(W, \widetilde W)$ does not depend on $\widetilde W$. Combining the above, we obtain the desired conclusion.
\end{proof}

Based on the  above results, we  define
\begin{equation*}
	\begin{aligned}
		N(t)=& \exp\bigg\{-\int^t_0 \( H(s)X(s)\)^\top d\widetilde{W}(s) -\frac12\int^t_0 |H(s)X(s)|^2 ds\bigg\},\q~ t\in[0,T], 
	\end{aligned}
\end{equation*}
which,   using It\^o's formula, can be expressed as follows:
$$N(t) = 1- \int_0^t N(s)\(H(s)X(s)\)^{\top}d\widetilde{W}(s),\q~ t\in[0,T].$$
Therefore, $N(\cd)$ is a one-dimensional continuous  local martingale.

\begin{proposition}\label{lemma2.1} \sl 
	$N(\cdot)$ is a martingale under the probability $\mathbb{P}$. 
\end{proposition}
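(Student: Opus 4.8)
The plan is to prove the equality $\mathbb{E}^{\mathbb{P}}[N(T)]=1$; since $N(\cdot)$ is already known to be a nonnegative continuous local martingale with $N(0)=1$ (hence a $\mathbb{P}$-supermartingale), this one identity upgrades it to a genuine $\mathbb{P}$-martingale, because a nonnegative supermartingale whose terminal expectation equals its initial value has constant expectation and is therefore a martingale. The whole difficulty is thus reduced to computing $\mathbb{E}^{\mathbb{P}}[N(T)]$, and the idea is to condition on the entire path of $W(\cdot)$ and to exploit the bound $\sup_{0\le t\le T}|X(t)|\le K(W)$ from \autoref{lemma}, in which $K(W)$ depends only on $W$.

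Concretely, I would first enlarge the filtration by setting $\mathcal{G}_t=\mathcal{F}_t\vee\sigma(W(r);0\le r\le T)$. Because $W(\cdot)$ and $\widetilde{W}(\cdot)$ are independent, $\widetilde{W}(\cdot)$ is a $\{\mathcal{G}_t\}$-Brownian motion under $\mathbb{P}$, and this property persists under the regular conditional probability $\mathbb{P}_W$ for $\mathbb{P}$-a.e. realization of $W$. Since $s\mapsto H(s)X(s)$ is $\mathbb{F}$-adapted (hence $\{\mathcal{G}_t\}$-predictable), the representation $N(t)=1-\int_0^t N(s)\big(H(s)X(s)\big)^{\top}d\widetilde{W}(s)$ continues to hold under $\mathbb{P}_W$ and exhibits $N(\cdot)$ as a $\mathbb{P}_W$-local martingale.

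Next I would verify the conditional Novikov condition — which here is essentially a boundedness statement. Given $W$, \autoref{lemma} together with the boundedness of $H$ from (A1) gives $|H(s)X(s)|\le\|H\|_\infty K(W)$ for all $s\in[0,T]$, so $\int_0^T|H(s)X(s)|^2\,ds\le T\|H\|_\infty^2 K(W)^2$, a finite constant once $W$ is fixed. Hence $\mathbb{E}^{\mathbb{P}_W}\big[\exp\big(\tfrac12\int_0^T|H(s)X(s)|^2\,ds\big)\big]<\infty$, and Novikov's criterion applied under $\mathbb{P}_W$ shows that $N(\cdot)$ is a true $\mathbb{P}_W$-martingale; in particular $\mathbb{E}^{\mathbb{P}_W}[N(T)]=N(0)=1$ for $\mathbb{P}$-a.e.\ $W$. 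Integrating over the law of $W$ then yields $\mathbb{E}^{\mathbb{P}}[N(T)]=\mathbb{E}^{\mathbb{P}}\big[\mathbb{E}^{\mathbb{P}_W}[N(T)]\big]=1$, and the proposition follows.

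The step I expect to demand the most care is the passage to the conditional probability $\mathbb{P}_W$: one must justify rigorously that $\widetilde{W}(\cdot)$ remains a Brownian motion and, crucially, that the stochastic-integral representation of $N(\cdot)$ is preserved under $\mathbb{P}_W$, so that $N(\cdot)$ genuinely is a $\mathbb{P}_W$-local martingale for almost every $W$. Once this is in place, the bound on $|H(s)X(s)|$ given $W$ and the application of Novikov are routine, as is the final integration over $W$.
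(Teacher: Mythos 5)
Your proposal is correct and follows essentially the same route as the paper: condition on the whole path of $W(\cdot)$, use the bound $\sup_{0\le t\le T}|X(t)|\le K(W)$ from \autoref{lemma} to verify the conditional Novikov condition, conclude $\mathbb{E}_{W}[N(t)]=1$, and integrate over the law of $W$. Your additional care about the enlarged filtration, the preservation of the stochastic-integral representation under $\mathbb{P}_W$, and the supermartingale upgrade only makes explicit what the paper leaves implicit.
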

\begin{proof}
We only  need to prove that $\mathbb{E}N(t)=1$. 
From  \autoref{lemma},   we  have 
$$\mathbb{E}_{W}\bigg\{ \exp\(\frac12\int^T_0 |X(s)|^2ds\)\bigg\}
\le\mathbb{E}_{W}\bigg\{ \exp\(\frac12 K^2 (W) T\)\bigg\}
= \exp\(\frac12 K^2 (W)T\) <\infty, \quad a.s..$$
Applying  the Girsanov theorem with the conditional Novikov condition satisfied, we get $\mathbb{E}_{W} N(t)=1$. Therefore,
$$\mathbb{E} N(t)=\mathbb{E} \Big\{\mathbb{E}_{W} \{N(t)\}\Big\}=1,\q~ t\in[0,T].$$
This completes the proof.
\end{proof}

Now, based on the preparations  above,  we can  break the circular dependence between the filtration $\dbF^Y$ and the control $u(\cdot) \in  \cU^b_{ad}$. 
As noted from  \autoref{lemma2.1}, for  any control $u(\cdot) \in  \cU^b_{ad}$,  we can apply Girsanov's theorem to transform the observation process $Y(\cdot)$  into a standard Brownian motion under a new probability measure $\mathbb{Q} $, which is defined by
$$\frac{d\mathbb{Q}}{d\mathbb{P}}\bigg|_{\cF_t}=N(t),\q~ t\in[0,T].$$
Then, the pair  $\big(Y(\cdot)^\top,W(\cdot)\big)^\top$ is a $(d+1)$-dimensional Brownian motion under the probability measure $\mathbb{Q}$. 

Let  $Z(\cd)=N^{-1}(\cd)$. It is then easy to verify that $Z(\cd)$ satisfies
		\begin{equation*}
			\begin{aligned}
				dZ(t) = &  Z(t) \Big\{(H(t)X(t))^{\top}d\widetilde{W}(t)+| H(t)X(t) |^2dt \Big\},\q~ t\in[0,T].
			\end{aligned}
		\end{equation*}
		Namely, 
		\begin{equation}\label{zzz}
			\begin{aligned}
				dZ(t)&=Z(t) \langle  H(t)X(t), dY(t)\rangle, \q~ t\in[0,T].
			\end{aligned}
		\end{equation}

Now,   the circular dependence is broken. Therefore, our next primary objective is to  establish SMP over the  control set $ \cU^b_{ad}$ under the weak formulation. 
Note that in this special case, 
the state process becomes a $(n+1)$-dimensional process $\big(X(\cdot), Z(\cdot)\big)$. Moreover,   the system is described by the following SDEs: 
\begin{equation}\label{well}
	\left\{
	\begin{aligned}
	dX(t)=&\big(A(t) X(t)+B(t)u(t)\big)dt+C(t)u(t)dW(t), \\
	dZ(t)=&Z(t)\big(H(t)X(t) \big)^{\top}  dY(t), \q~ t\in[0,T], \\
	X(0)= &x, \; Z(0) =1.
	\end{aligned}
	\right.
\end{equation}
The related  cost functional is given by
\begin{equation}\label{eq1010a}
 J(u(\cdot))=\mathbb{E}^{\mathbb{Q}}\bigg\{\int^T_0\big( \langle G(t)X(t), X(t) \rangle+\langle R(t)u(t),u(t)\rangle \big) Z(t)dt
	+ \langle G_1X(T), X(T) \rangle Z(T)\bigg\}.
\end{equation} 
We label the above problem as
Problem (O).

\textbf{Problem (O)}.  Find a  control $ u^* (\cdot)\in\cU^b_{ad}$   such that
$$
 J( u^*(\cdot))= \inf_{u \in \cU^b_{ad} }J(u(\cdot)). 
$$

\section{Stochastic maximum principle }\label{se4}
\renewcommand{\theequation}{\thesection.\arabic{equation}}

{\color{red}In this section, we first prove the existence of an optimal control $u^*(\cdot) \in \mathcal{U}^b_{\text{ad}}$  that minimizes the cost functional
 $(\ref{eq1010a})$ in the weak formulation. Then, we  deduce the SMP using the convex variation technique for 
Problem (O). Finally, we derive the  near optimal controls for the original problem.}

\subsection{\color{red}Existence of an optimal control}

	Recall that $Z(\cdot)$ is defined by (\ref{zzz}). It is clear that $Z(\cdot)$ is  a  $\mathbb{Q}$-martingale.
Although
we have  $\mathbb{E}^{\mathbb{Q}}[Z(\cd)] < \infty$, it is not enough for the variational estimation of the cost functional (\ref{eq1010a}).  Actually, we need its moments of order higher than 1 which is obtained in the following
lemma. 

	\begin{lemma}\label{lem41} \sl 
		There exists a constant $k\in(0,1)$ such that
		$\mathbb{E}^{\mathbb{Q}}[ Z^{1+k} (t)]< \infty$.  
	\end{lemma}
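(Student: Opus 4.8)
The plan is to convert the $\mathbb{Q}$-expectation of $Z^{1+k}$ into a $\mathbb{P}$-expectation, reducing everything to moment estimates for the explicit exponential $N(\cdot)=Z(\cdot)^{-1}$, which we can control via the bound $\sup_{0\le t\le T}|X(t)|\le K(W)$ from \autoref{lemma}. First I would write, since $d\mathbb{Q}/d\mathbb{P}|_{\cF_t}=N(t)=Z(t)^{-1}$,
\begin{equation*}
\mathbb{E}^{\mathbb{Q}}\big[Z^{1+k}(t)\big]=\mathbb{E}^{\mathbb{P}}\big[Z^{1+k}(t)\,N(t)\big]=\mathbb{E}^{\mathbb{P}}\big[Z^{k}(t)\big]=\mathbb{E}^{\mathbb{P}}\big[N(t)^{-k}\big].
\end{equation*}
So the task becomes showing $\mathbb{E}^{\mathbb{P}}[N(t)^{-k}]<\infty$ for some $k\in(0,1)$. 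Now
\begin{equation*}
N(t)^{-k}=\exp\!\Big\{k\!\int_0^t (H(s)X(s))^{\top}d\widetilde{W}(s)+\tfrac{k}{2}\!\int_0^t |H(s)X(s)|^2\,ds\Big\}.
\end{equation*}

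The standard trick is to split off a genuine exponential martingale. Write
\begin{equation*}
N(t)^{-k}=\underbrace{\exp\!\Big\{k\!\int_0^t (H(s)X(s))^{\top}d\widetilde{W}(s)-\tfrac{k^2}{2}\!\int_0^t |H(s)X(s)|^2\,ds\Big\}}_{=:\,\mathcal{E}_k(t)}\cdot\exp\!\Big\{\tfrac{k+k^2}{2}\!\int_0^t |H(s)X(s)|^2\,ds\Big\}.
\end{equation*}
Then I would condition on $W$: by \autoref{lemma}, $|H(s)X(s)|^2\le \|H\|^2 K(W)^2$ for $s\in[0,T]$ $\mathbb{P}$-a.s., so the conditional Novikov condition $\mathbb{E}_W\exp\{\tfrac12\int_0^T k^2|H(s)X(s)|^2 ds\}<\infty$ a.s.\ holds (exactly as in \autoref{lemma2.1}), which makes $\mathcal{E}_k(\cdot)$ a $\mathbb{P}_W$-martingale with $\mathbb{E}_W[\mathcal{E}_k(t)]=1$. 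Using Cauchy--Schwarz under $\mathbb{E}_W$,
\begin{equation*}
\mathbb{E}_W\big[N(t)^{-k}\big]\le \big(\mathbb{E}_W[\mathcal{E}_k(t)^2]\big)^{1/2}\Big(\mathbb{E}_W\exp\!\big\{(k+k^2)\!\int_0^T |H(s)X(s)|^2 ds\big\}\Big)^{1/2},
\end{equation*}
and each factor on the right is finite a.s.: the second by the a.s.\ boundedness of $|H(s)X(s)|^2$ by $\|H\|^2K(W)^2$, and the first because $\mathcal{E}_k(t)^2$ equals $\mathcal{E}_{2k}(t)$ times another exponential-of-bounded-integral factor, again finite under $\mathbb{E}_W$ by the same argument (choosing $k$ small enough that $2k<1$ causes no trouble since the bound is deterministic given $W$). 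Concretely, the cleanest route is simply: conditionally on $W$ everything is bounded by an explicit deterministic constant $c(W)$, so $\mathbb{E}_W[N(t)^{-k}]\le \exp\{c(k)\|H\|^2K(W)^2T\}$ a.s.\ for a fixed constant $c(k)$.

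Finally I would take $\mathbb{P}$-expectation over $W$:
\begin{equation*}
\mathbb{E}^{\mathbb{P}}\big[N(t)^{-k}\big]=\mathbb{E}\big[\mathbb{E}_W[N(t)^{-k}]\big]\le \mathbb{E}\exp\!\big\{c(k)\|H\|^2 K(W)^2 T\big\}.
\end{equation*}
The main obstacle — and the only place real work is needed — is that this last $W$-expectation need not be finite for arbitrary $k$, since $K(W)^2$ involves $\sup_{0\le s\le\sigma^0}|\widetilde M(s)|^2$ composed with the Itô integral against $W$; its tails are Gaussian-type, so $\mathbb{E}\exp\{\lambda K(W)^2\}<\infty$ only for $\lambda$ below some threshold. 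This is precisely why the statement only claims \emph{some} $k\in(0,1)$ rather than all: one chooses $k$ small enough that $c(k)\|H\|^2 T$ falls below that threshold. So the key step to get right is a quantitative tail (or exponential-moment) bound on $K(W)$ — tracing through the proof of \autoref{lemma}, $K(W)\le K+\|\ze\|^{-1}\sup_{0\le s\le\sigma^0}|\widetilde M(s)|$ with $\widetilde M$ a Brownian motion, so $\sup_{0\le s\le\sigma^0}|\widetilde M(s)|^2$ has a sub-exponential tail and $\mathbb{E}\exp\{\lambda K(W)^2\}<\infty$ for all sufficiently small $\lambda>0$ — after which picking $k$ accordingly finishes the proof.
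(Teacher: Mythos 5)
Your proof is correct and rests on the same two essential ingredients as the paper's, but it implements them through a different measure-theoretic device. The paper works forward under $\mathbb{Q}$: it factors $Z^{1+k}(t)$ into the Dol\'eans--Dade exponential of $(1+k)\int_0^t(HX)^\top dY$ — which defines a further measure $\tilde{\mathbb{Q}}$ — times the residual $\exp\{\tfrac{k(1+k)}{2}\int_0^t|HX|^2\,ds\}$, so that $\mathbb{E}^{\mathbb{Q}}[Z^{1+k}(t)]=\mathbb{E}^{\tilde{\mathbb{Q}}}\big[\exp\{\tfrac{k(1+k)}{2}\int_0^t|HX|^2\,ds\}\big]$; it then re-derives the bound of Lemma \ref{lemma} under $\tilde{\mathbb{Q}}$ (where $W$ is still a Brownian motion, since the new density involves only $Y$) and applies Fernique's theorem there. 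You instead pull everything back to $\mathbb{P}$ via $\mathbb{E}^{\mathbb{Q}}[Z^{1+k}(t)]=\mathbb{E}^{\mathbb{P}}[N(t)^{-k}]$, split off the exponential martingale $\mathcal{E}_k$, and dispose of it by conditioning on $W$ exactly as in Proposition \ref{lemma2.1}, leaving $\mathbb{E}\exp\{c(k)\|H\|^2K(W)^2T\}$ to be controlled by the Gaussian tail of $\sup_{0\le s\le\sigma^0}|\widetilde M(s)|$ already exhibited in the proof of Lemma \ref{lemma}. Both arguments close identically — exponential square-integrability of the running maximum of a Brownian motion on a deterministic bounded interval, with $k$ chosen small enough that the coefficient falls below the Fernique threshold — and you correctly identify that this threshold is precisely why the lemma claims only \emph{some} $k\in(0,1)$. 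Your route has the advantage of reusing Lemma \ref{lemma} and the conditional Novikov argument verbatim rather than repeating them under a new measure; the Cauchy--Schwarz step is superfluous (the residual is bounded by a $\sigma(W)$-measurable constant and can be pulled straight out of $\mathbb{E}_W$ before invoking $\mathbb{E}_W[\mathcal{E}_k(t)]=1$), as you observe yourself.
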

	
	\begin{proof}
		From \rf{zzz}, we have 
	\begin{equation*}
		\begin{aligned}
			Z(t)=&\  \exp\bigg\{\int^t_0  \(H(s)X(s)\)^\top dY(s)  -\frac12\int^t_0 | H(s)X(s)|^2    ds\bigg\},\q~ t\in[0,T]. 
		\end{aligned}
	\end{equation*}
	Therefore, 
	\begin{equation}\label{z^k}
		\begin{aligned}
			Z^{1+k}(t)=&\ \exp\bigg\{(1+k)\int^t_0  \(H(s)X(s)\)^\top  dY(s) -\frac{(1+k)^2}{2}\int^t_0 |H(s)X(s)|^2    ds\\
			&\ \quad \quad\  +\frac{1}{2}(1+k)k\int^t_0 | H(s)X(s)|^2    ds\bigg\},\q~ t\in[0,T]. 
		\end{aligned}
	\end{equation}
Define a new measure $\mathbb{\tilde Q}$ by 
	\begin{equation*}
		\begin{aligned}
			\frac{d\mathbb{\tilde Q}}{d\mathbb{Q}}\bigg|_{\cF_t}= &\  \exp\bigg\{(1+k)\int^t_0  \(H(s)X(s)\)^\top dY(s) -\frac{(1+k)^2}{2}\int^t_0 |H(s)X(s)|^2    ds\bigg\},\q~ t\in[0,T].
		\end{aligned}
	\end{equation*}	
Similar to the proof of Lemma \ref{lemma}, we can show that
$$|X(t)|\le K\Big\{1+\sup_{t\le\si^2}|\widehat{W}(t)|\Big\},$$
where $\si,\ K$ are two constants, and $\widehat{W}(\cdot)$ is a $\mathbb{\tilde Q}$-Brownian motion.

Applying the   Fernique theorem (see Kuo \cite[Theorem 3.3.1 on page 159]{HH1975}), we obtain that for any $t\in[0,T]$, 
	\begin{equation*}\label{fin}
		\begin{aligned}
			\mathbb{E}^{\mathbb{Q}}	[Z^{1+k}(t)] =& \	\mathbb{E}^{\mathbb{\tilde Q} }\bigg\{
			\exp\Big\{\frac{1}{2}(1+k)k\int^t_0 | H(s)X(s)|^2    ds\Big\}\bigg\} \\
			\leq &\ \mathbb{E}^{\mathbb{\tilde Q} }\Big\{\exp\Big(k(1+\sup_{t\le\si^2}|\widehat{W}(t)|^2 )
			\Big)\Big\}\\\
			<&\  \infty,
		\end{aligned}
	\end{equation*}
by taking 	 $k$ small enough. 
\end{proof}

	\ms
	
	Let $ \cU^{b}_{w}$ be the set of weak admissible controls  $(\Omega, \mathcal{F},  \mathbb{F}, \mathbb{P}, W(\cdot), Y(\cdot), u(\cdot))$   such that $u(\cdot) \in \cU^{b}_{ad}$.   
Now we are ready to present a result to establish the existence of the optimal control of Problem (O) in the weak formulation. 
\begin{theorem}\label{thm42}
	Let Assumptions A1 and A2 hold.  Then Problem (O) has a weak optimal control $u^*(\cdot) \in  \cU^b_{w}$. 
\end{theorem}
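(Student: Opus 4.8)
The plan is to prove existence of a weak optimal control for Problem (O) by a compactness argument in the weak formulation: take a minimizing sequence of weak-admissible controls, extract a subsequence converging in law, realize the limit on a common probability space via the Skorokhod representation theorem, check that the limit is again weak-admissible, and finish with a lower-semicontinuity estimate for the cost that uses the convexity of $J$ in the control. First, since $J(u(\cd))\ge 0$ and the choice $u\equiv 0$ yields a finite cost (the corresponding $X(\cd)$ is deterministic and bounded while $\mathbb{E}^{\mathbb Q}[Z(t)]=1$), the value $m\deq\inf_{u\in\cU^b_w}J(u(\cd))$ is finite; pick weak-admissible controls $(\Omega^n,\cF^n,\dbF^n,\mathbb P^n,W^n,Y^n,u^n)$ with $J(u^n)\downarrow m$ and form on each space the pair $(X^n,Z^n)$ solving \rf{well}, the integrated control $U^n(\cd)\deq\int_0^{\cd}u^n(s)\,ds$, and the occupation process $V^n(\cd)\deq\int_0^{\cd}|u^n(s)|^2\,ds$.

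Next I would establish tightness of the laws of $\Xi^n\deq(W^n,Y^n,U^n,V^n,X^n,Z^n)$ on the corresponding product path space. The pairs $(W^n,Y^n)$ carry Wiener measure; $U^n$ satisfies the deterministic bound $|U^n(t)-U^n(s)|\le K_c\,|t-s|^{1/2}$ with $U^n(0)=0$, so its laws all sit on one compact subset of $C([0,T];\dbR^d)$; and $V^n$ is nondecreasing with $0\le V^n\le K_c^2$, which gives tightness of its laws in $D([0,T])$ (equivalently, tightness of the random measures $|u^n(t)|^2\,dt$, whose total mass is $\le K_c^2$). For $X^n$ I would invoke the explicit representation \rf{solx}: its non-stochastic part is deterministically equicontinuous (same reasoning as for $U^n$, using the boundedness of $\ze$, $\ze^{-1}$, $A$, $B$), and the martingale part $\int_0^{\cd}\ze C u^n\,dW^n$ has quadratic variation bounded by $\|\ze C\|^2K_c^2\,\dbI_n$, so tightness follows from the Aldous criterion and the Burkholder--Davis--Gundy inequality. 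For $Z^n$ I would combine the SDE \rf{zzz} with the uniform-in-$n$ higher-moment bound of \autoref{lem41} (its constants depend only on the coefficients and $K_c$) to obtain the needed control of the modulus of continuity.

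By Prokhorov's theorem and the Skorokhod representation theorem, on a new probability space $(\bar\Omega,\bar\cF,\bar{\mathbb P})$ there are copies (not relabeled) with $\Xi^n\to\Xi=(W,Y,U,V,X,Z)$ $\bar{\mathbb P}$-a.s.\ in the product path space; then $(W,Y)$ is a $(1+d)$-dimensional Brownian motion. Since $U^n\to U$ uniformly and $\int_0^T|u^n(s)|^2\,ds\le K_c^2$ for every $n$, the path $U$ is absolutely continuous with derivative $u^*(\cd)$ obeying $\int_0^T|u^*(s)|^2\,ds\le K_c^2$ $\bar{\mathbb P}$-a.s.\ (every weak-$L^2$ limit of a subsequence of $u^n$ equals the distributional derivative of $U$, so in fact $u^n\rightharpoonup u^*$ in $L^2(0,T;\dbR^d)$ pathwise). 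The relation $u^n(t)\in\cF^{Y^n}_t$ passes to the limit through the joint laws, giving that $u^*(\cd)$ is $\dbF^Y$-progressively measurable, hence $u^*(\cd)\in\cU^b_{ad}$ on $(\bar\Omega,\bar\cF,\bar{\mathbb P})$. Rewriting the defining relations in \rf{well} in integrated (martingale-problem) form and using the a.s.\ convergence together with the uniform integrability furnished by the previous step, one identifies $(X,Z)$ as exactly the solution of \rf{well} driven by $(W,Y)$ under $u^*(\cd)$; thus $(\bar\Omega,\bar\cF,\bar{\dbF},\bar{\mathbb P},W,Y,u^*)\in\cU^b_w$.

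Finally I would show $J(u^*)\le\liminf_n J(u^n)=m$. The state-dependent contributions $\langle G X^n,X^n\rangle Z^n$ and $\langle G_1X^n(T),X^n(T)\rangle Z^n(T)$ converge $\bar{\mathbb P}$-a.s.\ and are uniformly integrable (the $X^n$ are dominated by a variable with all moments, as in \autoref{lemma}, while $\{Z^n(t)\}$ is bounded in $L^{1+k}$ by \autoref{lem41}), so they pass to the limit. For the control cost I would use that $R\ge\de\dbI_d$ makes $u\mapsto\langle Ru,u\rangle$ convex: the subgradient inequality gives $\langle Ru^n,u^n\rangle\ge\langle Ru^*,u^*\rangle+2\langle Ru^*,u^n-u^*\rangle$; multiplying by $Z^n\ge0$, integrating over $[0,T]\times\bar\Omega$, and letting $n\to\infty$ (using $u^n\rightharpoonup u^*$ in the time variable, $Z^n\to Z$, and the uniform integrability above) yields $\liminf_n\mathbb{E}^{\mathbb Q}\!\int_0^T\langle Ru^n,u^n\rangle Z^n\,dt\ge\mathbb{E}^{\bar{\mathbb Q}}\!\int_0^T\langle Ru^*,u^*\rangle Z\,dt$. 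Summing the three estimates gives $J(u^*)\le m$, and since $u^*(\cd)\in\cU^b_w$ the opposite inequality is immediate, so $u^*(\cd)$ is a weak optimal control. The two places I expect to be the real work are (i) showing that the Skorokhod limit $u^*$ is honestly an $\dbF^Y$-progressively measurable $L^2$-control, i.e.\ that no relaxation or filtration enlargement is forced, which requires carefully tracking the conditional-independence structure of $(W^n,Y^n,u^n)$ under passage to the limit; and (ii) running the convex lower-semicontinuity argument for the $Z$-weighted control cost with only the $L^{1+k}$-integrability of $Z$ from \autoref{lem41} available, which is precisely why the moment and tightness estimates above must be set up with care.
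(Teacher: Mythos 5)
Your overall strategy (minimizing sequence, tightness, Skorokhod representation, lower semicontinuity) is the right family of arguments, but there is a genuine gap at the identification step, and it is caused precisely by the feature that makes this problem hard: the control sits in the diffusion coefficient of \rf{stateequ}. From the uniform convergence $U^n\to U$ and the pathwise bound $\int_0^T|u^n|^2\,ds\le K_c^2$ you only extract \emph{weak} $L^2(0,T;\dbR^d)$ convergence $u^n\rightharpoonup u^*$. That suffices to pass to the limit in the drift $\int_0^{\cd}Bu^n\,ds$ and, via your convexity argument, to get lower semicontinuity of the running cost. It does \emph{not} suffice to identify the martingale part of the limit of $X^n$: in the martingale-problem formulation the quadratic variation of $X^n(\cd)-\int_0^{\cd}(AX^n+Bu^n)\,ds$ is $\int_0^{\cd}Cu^n(u^n)^\top C^\top\,ds$, and under weak convergence of $u^n$ this converges (along your subsequence) only to some nondecreasing matrix process $V$ with $V(t)\ge\int_0^tCu^*(u^*)^\top C^\top\,ds$, with strict inequality in general (take $u^n$ rapidly oscillating with $u^n\rightharpoonup0$ but $|u^n|\equiv1$). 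So the limit pair $(X,Z)$ you construct is the state of a \emph{relaxed} control, not the solution of \rf{well} driven by $u^*$, and the conclusion $J(u^*)\le m$ does not follow, since $J(u^*)$ must be evaluated along the genuine state under $u^*$, which is a different process from your $X$. Repairing this would require either a Roxin-type convexity/chattering argument for the set of attainable (drift, squared-diffusion, running-cost) triples, or strong convergence of the controls.

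The paper takes the second route and thereby avoids the issue: it applies Skorokhod's representation not to path-space laws but to the controls $u_n$ viewed as $\dbR^d$-valued random variables on the product space $([0,T]\times\Omega,\tfrac1T\,dt\,d\mathbb{Q})$, obtaining $u_n\to u^*$ $dt\otimes d\mathbb{Q}$-a.e.\ and hence $\mathbb{E}^{\mathbb{Q}}\big(\int_0^T|u_n-u^*|^2\,ds\big)^p\to0$; with strong convergence the stochastic integrals and the states converge directly, the $Z$-weighted costs pass to the limit using \autoref{lem41}, and Fatou's lemma finishes the proof with no convexity argument needed. (Both your argument and the paper's are terse on the $\cF^Y$-measurability of the limit control; you correctly flag this as delicate, and it does not simply ``pass to the limit through the joint laws,'' but the quadratic-variation issue above is the step that actually fails as written.)
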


\begin{proof}
 Let $u_n(\cdot) \in  \cU^b_{ad} $ be a sequence of admissible controls such that $$J(u_n(\cdot))\to \inf_{u\in\cU^b_{ad}}J(u(\cdot)).$$
We can regard $\hat{\Om}\equiv[0,T]\times\Om$ as a probability space with probability measure
$\frac{1}{T}dt d\mathbb{Q}$. Then, $u_n(\cdot)$ can be regarded as a sequence of random variables on $\hat{\Om}$ with
uniformly bounded second moments. We take a subsequence such that $u_{n_k}(\cdot)$ converges in distribution to $u^*(\cdot)$. 
Without loss of generality, we assume that $u_n(\cdot)$ converges in distribution to $u^*(\cdot)$. By Skorohod's representation, 
changing the probability space if necessary,   we may and will assume that  $u_n(\cdot)\to u^*(\cdot)$ a.s..  By the uniform boundedness of the second-integral in $t$-variable of $u_n(\cdot)$, we have 
for $p>0$ $$\mathbb{E}^\mathbb{Q}\(\(\int^T_0|u_n(s)-u^*(s)|^2ds\)^p\)\to 0.$$
Let $(X^*(\cdot), Z^*(\cdot))$ be the solution of system (\ref{well})  corresponding to the control $u^*(\cdot)$, and $(X_n(\cdot), Z_n(\cdot))$  the state process with respect to $u_n(\cdot)$.  Then it is  easy to verify that
$$\mathbb{E}^\mathbb{Q}\sup_{s\in[0,T]}|X_n(s)-X^*(s)|^p\to 0.$$
Applying Lemma \ref{lem41} and choosing $\epsilon \in (0, k)$,  we have
\begin{align*}
	&	\mathbb{E}^{\mathbb{Q}}	\sup_{s\in[0,T]}|Z_n(s)-Z^*(s)|^{1+\ep}\\
	&\le\ K	\mathbb{E}^{\mathbb{Q}}	\sup_{s\in[0,T]}\((Z_n(s)+Z^*(s))^{1+\ep}\left|\int^s_0((HX_n)^\top-(HX^*)^\top)dY\right|^{1+\ep}\)\\
	&\ +K	\mathbb{E}^{\mathbb{Q}}	\sup_{s\in[0,T]}\((Z_n(s)+Z^*(s))^{1+\ep}\int^s_0||HX_n|^2-|HX^*|^2|^{1+\ep}dt\)\\
	&\le\ K\(	\mathbb{E}^{\mathbb{Q}}	\sup_{s\in[0,T]}(Z_n(s)+Z^*(s))^{1+2\ep}\)^{\frac{1+\ep}{1+2\ep}}\\
	&\qquad\times\(	\mathbb{E}^{\mathbb{Q}}	\sup_{s\in[0,T]}\left|\int^s_0((HX_n)^\top-(HX^*)^\top)dY\right|^{\frac{(1+\ep)(1+2\ep)}{\ep}}\)^{\frac{\ep}{1+2\ep}}\\
	&\ +K\(	\mathbb{E}^{\mathbb{Q}}	\sup_{s\in[0,T]}(Z_n(s)+Z^*(s))^{1+2\ep}\)^{\frac{1+\ep}{1+2\ep}}\\
	&\ \qquad\times\(	\mathbb{E}^{\mathbb{Q}}	\sup_{s\in[0,T]}\left|\int^s_0||HX_n|^2-|HX^*|^2|^{1+\ep}ds\right|^{\frac{1+2\ep}{\ep}}\)^{\frac{\ep}{1+2\ep}}\\
	&\le\ K\(	\mathbb{E}^{\mathbb{Q}}	\left|\int^T_0|X_n-X^*|^2ds\right|^{\frac{(1+\ep)(1+2\ep)}{2\ep}}\)^{\frac{\ep}{1+2\ep}}\\
	&\ +K\(	\mathbb{E}^{\mathbb{Q}}	\left|\int^T_0|X_n-X^*|^{2(1+\ep)}ds\right|^{\frac{1+2\ep}{2\ep}}\)^{\frac{\ep}{1+2\ep}}\\
	&\to 0, \;\;  \text{as} \;\; \; n \to \infty.
\end{align*}
These estimates  imply that $(X^*(\cdot), Z^*(\cdot), u^*(\cdot))$ satisfy (\ref{well}) in the weak sense. The optimality of $u^*(\cdot)$ follows from Fatou's lemma. 
\end{proof}

\subsection{\color{red}SMP  for Problem (O)}
	
In this subsection, we present the necessary conditions satisfied by the optimal control	 $u^*(\cdot) \in\cU^b_{ad}$.  To this end,  we
suppose that  $u^*(\cdot)+\e v(\cdot)\in\cU^b_{ad}$ for $\e \in (0, 1)$.

\ms

Let $(X^{\ep }(\cdot),Z^{\ep }(\cdot))$  denote the corresponding solution of SDE \rf{well} with $u(\cd)$ replaced by  $u^*(\cdot)+\ep v(\cdot)$, and let  $(X^*(\cdot), Z^*(\cdot))$ denote the optimal trajectory of SDE \rf{well}  with $u(\cd)$ replaced by the corresponding optimal control  $u^*(\cdot)$. 
The following are the related  variation equations: 
	for $t \in [0, T]$, 
	
	\begin{equation}\label{lem31}
		\left\{
		\begin{aligned}
			dX^1(t)=&\Big\{A(t)X^1(t)+B(t)v(t)\Big\}dt+C(t)v(t)dW(t),\\
			dZ^1(t)=&\Big\{Z^1(t)  (H(t) X^*(t))^{\top} +Z^*(t) (H(t) X^1(t))^{\top}\Big\}dY(t),\\
			X^1(0)= & 0, \; \quad Z^1(0)=0.
		\end{aligned}
		\right.
	\end{equation}
	
	In the following, we  give some basic lemmas  which will be helpful in deriving the variation inequality later.
	
	\begin{lemma}\label{sss42} \sl 
		For any $t\in[0,T]$,
		\begin{equation}\label{3.1}
			X^1(t)=\frac{X^{\ep }(t)-X^*(t)}{\ep},
		\end{equation}
		and
		\begin{equation}\label{32}
			Z^1(t)=\lim_{\ep\to 0}\frac{Z^{\ep }(t)-Z^*(t)}{\ep},
		\end{equation}
		in the sense that
		$$\lim_{\ep\to 0}\mathbb{E}^\mathbb{Q}\left|\frac{Z^{\ep }(t)-Z^*(t)}{\ep}-Z^1(t)\right|^{1+k'}=0,$$
		where $k' \in(0,k)$ and $k$ is given by Lemma \ref{lem41}. 
	As a consequence, we have $\mathbb{E}^{\mathbb{Q}}[|Z^1(t)|^{1+k'}]<\infty$.
	\end{lemma}	
	
	\begin{proof}
		
		Due to the linearity of the state equation with respect to $X$, (\ref{3.1})  is clearly true. 
		Now we prove  (\ref{32}).   Note that 
	\begin{equation*}
		\begin{aligned}
			Z^\epsilon(t)=&\  \exp\bigg\{\int^t_0  \big(H(s)X^\epsilon(s)\big)^\top dY(s)  -\frac12\int^t_0 | H(s)X^\epsilon(s) |^2    ds\bigg\}.
		\end{aligned}
	\end{equation*}
	Therefore,  
	\begin{equation}\label{ww}
		\begin{aligned}
			\lim_{\ep\to 0}\frac{Z^{\ep }(t)-Z^*(t)}{\ep}= &\ Z^*(t) \bigg\{ \int^t_0 (H(s)X^1(s))^{\top} dY(s) - \int^t_0 (H(s)X^*(s))^\top (H(s)X^1(s)) ds  \bigg\}.
		\end{aligned}
	\end{equation}
Denote the right hand side of (\ref{ww}) as  $M^1(t)$.
Applying It\^o's formula, we have 
	\begin{equation*}
		\begin{aligned}
			dM^1(t) =  M^1(t) (H(t)X^*(t))^{\top} dY(t) + Z^*(t)(H(t)X^1(t) )^\top dY(t),
		\end{aligned}
	\end{equation*}
By the uniqueness of the solution to the second equation of (\ref{lem31}), we have $M^1(\cd) = Z^1(\cd)$. 
\textcolor{red}{
Note that  for $k' \in (0, k )$,   we can choose $m \in (1,  \frac{1+k}{1+k'})$,  such that 
\begin{equation}\label{xuxu46}
\begin{aligned}
&\mathbb{E}^{\mathbb{Q}} \Big| Z^*(t) \int ^t_0(H(s)X^*(s))^\top H(s)X^1(s) ds  \Big| ^{1+k'} \\
\leq &K \mathbb{E}^{\mathbb{Q}} \Big[Z^*(t)^{1+k'}  \sup_{s \in [0, t]} |X^*(s)^\top X^1(s)|^{1+k'} \Big] \\
\leq & K\Big\{ \mathbb{E}^{\mathbb{Q}} \Big[Z^*(t)^{(1+k')m} \Big] \Big\} ^{\frac{1}{m}} \Big\{ \mathbb{E}^{\mathbb{Q}} \Big( \sup_{s \in [0, t]} |X^*(s)|^{2(1+k')n} +\sup_{s \in [0, t]} |X^1(s)|^{2(1+k')n} \Big)  \Big\}^{\frac{1}{n}} \\
< &  \infty,
\end{aligned}
\end{equation}
where $n$ is such that $\frac1n+\frac1m=1$.
 Further,  we have 
 \begin{equation}\label{xuxu47}
 \begin{aligned}
 	&\mathbb{E}^{\mathbb{Q}} \Big| Z^*(t) \int^t_0 (H(s)X^1(s))^{\top} dY(s)    \Big| ^{1+k'} \\
 = & \mathbb{E}^{\mathbb{Q}} \Big[Z^*(t)^{1+k'}  \Big|  \int^t_0 (H(s)X^1(s))^{\top} dY(s) \Big|^ {1+k' }  \Big] \\
 	\leq & K\Big\{ \mathbb{E}^{\mathbb{Q}} \Big[Z^*(t)^{(1+k')m} \Big] \Big\} ^{\frac{1}{m}} \Big\{ \mathbb{E}^{\mathbb{Q}}   \Big|  \int^t_0 |X^1(s)|dY(s) \Big|^{(1+k')n} \Big\}^{\frac{1}{n}} \\
 	\leq & K\Big\{ \mathbb{E}^{\mathbb{Q}} \Big[Z^*(t)^{(1+k')m} \Big] \Big\} ^{\frac{1}{m}} \Big\{ \mathbb{E}^{\mathbb{Q}}   \Big( \int^t_0 |X^1(s)|^2ds \Big)^{\frac{1}{2}(1+k')n} \Big\}^{\frac{1}{n}}  \\
 	< &  \infty.
 \end{aligned}
 \end{equation}
Combining  (\ref{xuxu46}, \ref{xuxu47}),  we conclude that $\mathbb{E}^{\mathbb{Q}}[|Z^1(t)|^{1+k'}]<\infty$.   } 
\end{proof}

\textcolor{red}{
By symmetry and Lemma \ref{lem41}, we have the following result.
	\begin{lemma}\label{nlem41} \sl 
	There exists a constant $k\in(0,1)$ such that
	$\mathbb{E}[ N^{1+k} (t)]< \infty$.  
\end{lemma}}

Applying Lemmas  \ref{lem41} and  \ref{sss42},  the optimality of $u^*(\cdot)$ implies
\begin{equation}\label{Ju*}
	\begin{aligned}
0\le& \lim_{\ep\to 0}\frac{J(u^*+\ep v)-J(u^*)}{\ep}\\
=&\mathbb{E}^{\mathbb{Q}}\int^T_0\Big\{(2\langle GX^1,X^* \rangle+2\langle Rv, u^*\rangle )Z^*+(\langle GX^*,X ^*\rangle+\langle Ru^*, u^*\rangle)Z^1\Big\}dt\\
&+\mathbb{E}^{\mathbb{Q}}\Big\{2\langle G_1X^*(T), X^1(T)\rangle  Z^*(T)+\langle G_1X^*(T), X^*(T) \rangle Z^1(T)\Big\}. 
\end{aligned}
\end{equation}
In the above integration, we dropped the time parameters for the simplicity of the presentation.
We will continue to do so in the rest of the paper when it is suitable.

Define the triples $\big(P(\cdot), Q_1(\cdot), Q_2(\cdot)\big)$  and $\big(p(\cdot), q_1(\cdot), q_2(\cdot)\big)$ as  the solutions of the following equations
	\begin{equation}\label{eq0208d}
	\left\{
	\begin{aligned}
		dP(t)=&-\(2Z^*G X^*+A^{\top}P+ Z^* (q_2  H)^\top\)dt +Q_1(t)dW(t)+Q_2(t)dY(t),\; t\in[0,T], \\
		P(T)=&\ 2Z^*(T)G_1X^*(T)
	\end{aligned}
	\right.
\end{equation}
and 
\begin{equation}\label{eq0208e}
	\left\{
	\begin{aligned}
		dp(t)=&-\(\langle GX^*, X ^*\rangle +\langle Ru^*, u^*\rangle+q_2HX^*\)dt+q_1(t)dW(t)+q_2(t)dY(t),\;  t\in[0,T],\\
	 p(T)=&\  \langle G_1X^*(T), X^*(T) \rangle,
	\end{aligned}
	\right.
\end{equation}
respectively.  

{\color{red} Note that it is easy to prove that $\cF^{W,Y}_t = \cF_t$. Then, we present the definition of the solution to BSDEs (\ref{eq0208d}) and (\ref{eq0208e}). 

\begin{definition}\label{def0208a}
We say that $(P,Q_1,Q_2)$ is a solution to BSDE (\ref{eq0208d}) if it is $\cF_t$-adapted and satisfies, for all $t\in[0,T]$, the following equation $\dbQ$-a.s.:
\begin{equation}\label{eq0211c}
	\begin{aligned}
		P(t)=2Z^*(T)G_1X^*(T)+\int_t^T\bigl(2Z^*G X^*+A^{\top}P+ Z^*(q_2H)^\top\bigr)\,ds \\
		-\int_t^T Q_1(s)\,dW(s)-\int_t^T Q_2(s)\,dY(s),
	\end{aligned}
\end{equation}
A similar definition applies to $(p,q_1,q_2)$.
\end{definition}

\begin{lemma}\label{aaas}
	Under Assumptions (A1) and (A2),  BSDEs 
(\ref{eq0208d}) and (\ref{eq0208e})  admit  a unique solution. Moreover,  $(P,Q_1,Q_2)$ is $\ga_1$th-integrable for $\ga_1\in (1,1+k)$  and $(p,q_1, q_2)$ is  $\ga_2$th-integrable
for any $\ga_2>1$  in the following sense:
\begin{equation}\label{eq0211d}\dbE^{\dbQ}\(\sup_{t\in[0,T]}|P(t)|^{\ga_1}+\(\int^T_0\(|Q_1(t)|^2+|Q_2(t)|^2\)dt\)^{\ga_1/2}\)<\infty\end{equation}
and
\begin{equation}\label{eq0211b}
\dbE^{\mathbb{Q}}\(\sup_{t\in[0,T]}|p(t)|^{\ga_2}+\(\int^T_0\(|q_1(t)|^2+|q_2(t)|^2\)dt\)^{\ga_2/2}\)<\infty,\end{equation}
where $k$ is given by Lemma \ref{lem41}.
\end{lemma}
\begin{proof}
Under the original measure  $\mathbb{P}$, we have
	\begin{equation}\label{thmbsde}
	\left\{
	\begin{aligned}
		d p(t)=&-\Big(X^{*\top} GX^* +u^{*\top} R  u^*\Big)dt + q_1(t)dW(t)+q_2 (t)d\widetilde W(t) \\
		p(T)=&X^*(T)^{\top} G_1  X^*(T),
	\end{aligned}
	\right.
	\end{equation}
Note that for any $m >  1$, there exists a constant $K$ such that
$$\dbE\(\sup_{s\in[0,T]}|X^*(s)|^m\)\le K\dbE\(1+\(\int^T_0|u^*(s)|^2ds\)^{m/2}\)\le K\(1+K_c^m\).$$
Hence, by the standard BSDE theory (see, e.g., \cite[Chapter 5]{PR}), the BSDE (\ref{thmbsde}) admits a unique solution. Moreover,  the following estimate holds:
 $$
 \begin{aligned}
 	&\mathbb{E}\left\{\sup_{s \in [0, T] }|p(s)|^{m} +  \left(\int^T_0 |q_1(s)|^2ds\right)^{\frac{m}{2}} + \left(\int^T_0 |q_2(s)|^2ds\right )^{\frac{m}{2}}  \right\}\\
 	& \leq K\mathbb{E} \left(  |X^*(T)|^{2m}+ \Big(\int^T_0 (X^{*\top} GX^*  +u^{*\top} R  u^*) dt\Big)^{m} \right) < \infty. 
 \end{aligned}
 $$
Applying Lemma  \ref{nlem41},  we have 
 $$
 \begin{aligned}
 	\mathbb{E}^{\mathbb{Q }}[\sup_{s \in [0, T] }|p(s)|^{\gamma_2}  ] = &  \mathbb{E}[ N(t)\sup_{s \in [0, T] }|p(s)|^{\gamma_2}] \\
 	\leq &K  \Big(\mathbb{E} N(t)^{1+k} \Big)^{\frac{1}{1+k}}  \Big (\mathbb{E} \sup_{s \in [0, T] }|p(s)|^{\gamma_2 (1+ \frac{1}{k})} \Big)^{\frac{k}{1+k}} < \infty, 
 \end{aligned}
 $$
and 
$$
\begin{aligned}
	\mathbb{E}^{\mathbb{Q }}\left(\int^T_0 |q_1(s)|^2ds\right)^{\frac{\gamma_2}{2}}  =&   \mathbb{E}\left\{N(t)\left(  \int^T_0 |q_1(s)|^2ds\right)^{\frac{\gamma_2}{2}} \right\} \\
	  \leq &  K \Big(\mathbb{E} N(t)^{1+k} \Big)^{\frac{1}{1+k}}   \mathbb{E}\left\{\left(  \int^T_0 |q_1(s)|^2ds\right)^{\frac{\gamma_2}{2}(1+ \frac{1}{k}) } \right\} ^ {\frac{k}{1+k}} < \infty. 
\end{aligned}
$$
Similar arguments apply to $q_2$, which completes the proof of \eqref{eq0211b}. 

Having obtained $q_2$ above, we now proceed to solving BSDE (\ref{eq0211c}). To this end, we need to verify the integrability of the drift term.  Note that by Doob's  inequality,  we have $\dbE^{\dbQ} [\sup_{s \in [0, T]}Z^*(s)^{1+k}]< \infty$. Therefore, 
 for $\eta \in (0, k)$, we can choose $m \in (1, \frac{1+k}{1+\eta})$  such that
\begin{eqnarray*}
&&\dbE^{\dbQ}\left|\int^T_0|2Z^*G X^*+ Z^* (q_2 H)^\top|ds\right|^{1+\eta}\\
&\le&K\dbE^{\dbQ} \left[ \sup_{s\in[0, T]} Z^*(s)^{1+\eta} \(\int^T_0 \( |X^*(s)|^2+|q_2(s)|^2\)ds\)^{\frac{1}{2}(1+\eta)}\right]\\
&\le&K\left[ \dbE^{\dbQ} \sup_{s\in[0, T]} Z^*(s)^{m(1+\eta)} \right]^{\frac{1}{m}} \left[ \dbE^{\dbQ}  \(\int^T_0 \( |X^*(s)|^2+|q_2(s)|^2\)ds\)^{\frac{n}{2}(1+\eta)}\right]^{\frac{1}{n}} \\
&< & \infty.
\end{eqnarray*}
Moreover, 
$$
\dbE^{\dbQ}  |2Z^*(T)G_1X^*(T)| ^{1+\eta}  \leq   K\left[ \dbE^{\dbQ}     Z^*(T)^{m(1+\eta)} \right]^{\frac{1}{m}} \left[ \dbE^{\dbQ}     X^*(T)^{n(1+\eta)} \right]^{\frac{1}{n}}  < \infty. 
$$
Therefore, by the standard BSDE theory, BSDE (\ref{eq0208d}) admits a unique solution $(P, Q_1, Q_2)$ satisfying (\ref{eq0211d}). This completes the proof.

\end{proof}}

For simplicity of notation, we set
$$-\beta=2Z^*G X^*+A^{\top}P+ Z^* (q_2   H)^\top$$
and
$$-\alpha=\langle GX^*, X ^*\rangle +\langle Ru^*, u^*\rangle+q_2HX^*.$$
 Applying  It\^o's formula to $\langle P(\cdot), X^1(\cdot)\rangle$, we obtain 
\begin{align*}
d\langle P(t), X^1(t)\rangle =
&\( \langle \beta, X^1\rangle + \langle P, AX^1+Bv\rangle+\langle Q_1, Cv\rangle  \)    dt \\
&+\(\langle Q_1, X^1\rangle +\langle P, Cv\rangle \)dW(t)+\langle X^1, Q_2dY(t)\rangle.
\end{align*}
{\color{red}Note that $S(t)\equiv\int^t_0\<Q_1,X^1\>dW(s)$ is a local martingale. Let $\si_n$ be a localizing sequence. Then,  for $\eta \in (0, k)$, choose $m \in (1, \frac{1+k}{1+\eta})$, we have 
$$
\begin{aligned}
\dbE^{\dbQ}|S(t\wedge\si_n)|^{1+\eta}\le &  \dbE^{\dbQ}\(\sup_{s\in[0,T]}|X^1(s)|^{2}\int^T_0|Q_1(s)|^2ds\)^{(1+\eta)/2}\\
\leq &K\left\{\dbE^{\dbQ}\(\sup_{s\in[0,T]}|X^1(s)|^{(1+\eta)n} \) \right\}^{\frac{1}{n}}\left\{\dbE^{\dbQ}\(\int^T_0|Q_1(s)|^2ds\)^{m(1+\eta)/2} \right\}^{\frac{1}{m}}\\
<&   \infty. 
\end{aligned}
$$
Thus, $\{S(t\wedge\si_n): n\ge 1\}$ is uniformly integrable, and hence, $\dbE^{\dbQ} S(t)=0$. The other stochastic integrals can be calculated similarly.}
Therefore 
\begin{align*}
\mathbb{E}^\mathbb{Q}\langle P(T), X^1(T)\rangle = \mathbb{E}^\mathbb{Q}\int^T_0
\( \langle \beta, X^1\rangle + \langle P, AX^1+Bv\rangle+\langle Q_1, Cv\rangle  \)dt.
\end{align*}
Similarly, 
\begin{align*}
d(p(t)Z^1(t))
=&\( \alpha Z^1 +q_2 \( Z^1  (H  X^* )^{\top} +Z^*  (H  X^1 )^{\top} \)^{\top}\) dt+q_1 Z^1 dW(t)\\
&+\(Z^1  q_2 +p   \(Z^1   (H  X^* )^{\top} +Z^*  (H X^1 )^{\top}\) \)dY(t).
\end{align*}
\textcolor{red}{
	Applying Lemma \ref{sss42} and  by Doob's  inequality,  we have $\dbE^{\dbQ} [\sup_{s \in [0, T]}|Z^1(s)|^{1+k'}]< \infty$.  We then choose a localizing sequence  $\tau_n$, take $r \in (0, k')$ and  $l \in (1, \frac{1+k'}{1+r})$, and apply Lemma \ref{aaas} to obtain:
 $$
 \begin{aligned}
 \dbE^{\dbQ}\left|\int^{t\wedge \tau_n}_0q_1Z^1dW(t)\right| ^{1+r} \leq &  \dbE^{\dbQ} \left\{\sup_{s \in [0, T]} |Z^1(s)|^{1+r}\left|\int^{ T}_0|q_1(s)|^2ds\right| ^{\frac{1}{2}(1+r)} \right\} \\
 \leq & \left\{\dbE^{\dbQ} \sup_{s \in [0, T]} |Z^1(s)|^{l(1+r)} \right\}^ { \frac{1}{l}}\left\{  \dbE^{\dbQ}  \left(\int^{ T}_0|q_1(s)|^2ds\right) ^{\frac{l'}{2}(1+r)} \right\}  ^{\frac{1}{l'}}\\
 < &  \infty,
 \end{aligned}
 $$
 Therefore, $\left \{\int^{t\wedge \tau_n}_0q_1Z^1dW(t); n \geq 1\right\} $ is   uniformly integrable, which implies that $ \dbE^{\dbQ}\int^{ T}_0q_1Z^1dW(t)  = 0 $. Similarly, $\dbE^{\dbQ}\int^{ T}_0Z^1q_2dY(t)  = 0$. 
 Moreover, for $h \in (0, k)$ and $m \in (1,  \frac{1+k}{1+h})$, we have 
 $$
 \begin{aligned}
& \dbE^{\dbQ} \left|\int^T_0 p(t) Z^*  (H X^1 )^{\top} dY(t)  \right| ^{1+h} \\
 \leq &K  \dbE^{\dbQ} \left|\int^T_0 p(t) (X^1)^\top Z^*  (t) dY(t)  \right| ^{1+h}  \\
 \leq & K  \dbE^{\dbQ} \left(\int^T_0 |p(t)|^2|X^1(t)|^2 Z^*  (t) ^2dt \right)^{\frac{1}{2}(1+h)}   \\
 \leq & K  \dbE^{\dbQ} \left(\sup_{t \in [0, T]}  |p(t)|^{1+h} \sup_{t \in [0, T]}  |X^1(t)|^{1+h} \sup_{t \in [0, T]}  |Z^* (t)|^{1+h}\right)\\
 \leq & K \left\{ \dbE^{\dbQ} \left(\sup_{t \in [0, T]}  |p(t)|^{2(1+h)} + \sup_{t \in [0, T]}  |X^1(t)|^{2(1+h)} \right)^n \right \}^{\frac{1}{n}} \left\{ \dbE^{\dbQ} \left( \sup_{t \in [0, T]}  |Z^* (t)|^{m(1+h)}\right)  \right \}^{\frac{1}{m}}  \\
 <& \infty, 
 \end{aligned}
 $$
 Therefore, $\dbE^{\dbQ} \int^T_0 p(t) Z^*  (H X^1 )^{\top} dY(t)  = 0 $. Similarly, $\dbE^{\dbQ} \int^T_0 p(t) Z^1  (H X^* )^{\top} dY(t)  = 0 $. }

So we have
\begin{align*}
\mathbb{E}^\mathbb{Q}(p(T) Z^1(T)) =  \mathbb{E}^\mathbb{Q}\int^T_0
\( \alpha Z^1 +q_2\( Z^1  (H  X^* )^{\top} +Z^*  (H  X^1 )^{\top} \)^{\top}\)dt. 
\end{align*}
We  continue the calculation   \rf{Ju*} with
\begin{align*}
0\le&\ \mathbb{E}^\mathbb{Q} \int^T_0
\langle 2Z^*GX^*+\beta+A^{\top}P+Z^*(q_2 H)^\top, X^1\rangle dt\\
&\ +\mathbb{E}^\mathbb{Q}\int^T_0
\(\langle GX^*, X^*\rangle +\langle Ru^*, u^*\rangle + \alpha +q_2 HX^*\) Z^1dt\\
&\ +\mathbb{E}^\mathbb{Q}\int^T_0
 \< 2Z^*Ru^*+B^{\top}P+C^{\top}Q_1, v\> dt\\
=&\mathbb{E}^\mathbb{Q}\int^T_0
\<2Z^* Ru^*+B^{\top}P+C^{\top}Q_1, v\> dt.
\end{align*}
Define a stopping time $\tau$ as follows 
\begin{equation}\label{stopping}
	\tau=\inf\left\{t \in [0, T]:\;\int^t_0|u^*(s)|^2ds  = K^2_c   \right\},
	\end{equation}
	with the convention $\inf \emptyset  = T$. 
 If $\tau < T$, then for  $t < \tau $,  we have $\int^{t}_0|u^*(s)|^2ds<K^2_c.$   By taking  any $v(\cd) $,  we  obtain 
 \begin{equation}
 	u^*(t)=-\frac1{2}\(\mathbb{E}^\mathbb{Q}[Z^*(t)| \mathcal{F}^Y_t]R(t)\)^{-1}\big(B^{\top}(t)\mathbb{E}^\mathbb{Q}[ P(t) | \mathcal{F}^Y_t]+C^{\top}(t)\mathbb{E}^\mathbb{Q}[Q_1(t)| \mathcal{F}^Y_t]\big).
 	\end{equation}
 
 Furthermore, $u^*(t)=0$ for $t\in[\tau,T]$.
In summary, we have
 \begin{equation}\label{conds}
 	u^*(t)= \begin{cases}  -\frac1{2}\Big\{ \mathbb{E}^\mathbb{Q}[Z^*(t)| \cF^Y_t]R(t)\Big\}^{-1}
 	\mathbb{E}^\mathbb{Q}\big[ B^{\top}(t) P(t)+ C^{\top}(t) Q_1(t) \bigm| \cF^Y_t\big],  \text {\q if \q } t < \tau \land T, \\ 0,    \quad \quad  \quad \quad \quad \quad \quad \quad \quad   \quad \quad \quad \quad  \quad \quad \quad \quad  \quad \quad \quad \quad   \quad \q\q\  \      \text {\q if \q }     \tau \land T \leq t \leq T.\end{cases}
 	\end{equation}
Therefore, we have the following SMP for   Problem (O). 
\begin{theorem}\label{mainthm}
	Let Assumptions (A1-A2) hold.  Suppose that $u^*(\cdot)$ is an optimal control  for Problem (O). Then,
	 $u^*(\cdot)$  satisfies condition (\ref{conds}), where  
	 $\big( X(\cdot),   Z(\cdot), P(\cdot),Q_1(\cdot), Q_2(\cdot),  p(\cdot), q_1(\cdot), q_2(\cdot)\big)$ 
	 denotes the adapted solution  to  the following FBSDEs: for  $t \in [0, T]$,
	\begin{equation} \label{state410}
		\left\{
		\begin{aligned}
			dX(t)&=\big(A(t) X(t)+B(t) u^*(t)\big)dt+C(t)u^*(t)dW(t), \\
			dZ(t)&=Z(t)\big(H(t)  X(t)\big)^{\top}dY(t),\\
			X(0)&= x, \;  Z(0)=1, \\
		\end{aligned}
		\right.
	\end{equation}
	and
	\begin{equation}\label{eq0722b}
		\left\{
		\begin{aligned}
			dP(t)=&-\Big(2 ZG   X+A^{\top} P+  q_2 ZH \Big)dt+ Q_1(t)dW(t)+ Q_2(t)dY(t),\\
			d p(t)=&-\Big(X^\top GX  +u^{*\top} R  u^*+ q_2 H X\Big)dt + q_1(t)dW(t)+q_2(t)dY(t) \\
		P(T)=&2Z(T)G_1 X(T),\qquad  p(T)=X(T)^\top G_1  X(T),
		\end{aligned}
		\right.
	\end{equation}
\end{theorem}

{\color{red}Note that the BSDE (\ref{eq0722b}) is understood in the sense of Definition \ref{def0208a}.}

\subsection{\color{red}Near optimal control for Problem (SLQ)}

Finally, in this subsection we derive the  near optimal controls for Problem (SLQ) in the weak formulation.
We take $K_c=n$   and  denote 
  $$\cU^n_{ad} \deq  \Big\{ u(t) : [0, T] \times \Omega \to \mathbb{R}^d | u(t) \in \cF^Y_t \ \text{and} \  \int^T_0 |u(t)|^2dt\le n^2,\;\;\forall t\in[0,T]\Big\},$$
   and  the corresponding  weak admissible controls denoted by $\cU^n_w$.

Further, we  denote  the corresponding optimal control obtained from Theorem \ref{thm42}   by 
 $u^n (\cdot) \in \cU^n_{ad}$,  and the corresponding weak admissible control denoted by 
 $u^n_w(\cdot)  = (\Omega^n, \mathcal{F}^n,  \mathbb{F}^n, \mathbb{P}^n, W^n(\cdot), Y^n(\cdot), u^n(\cdot)) \in \cU^n_w$.

For any $\ep\in (0, 1)$,  let $\bar{u}(\cdot) \in \cU_{ad}$ be a control  such that
 	$$J\big(\bar{u}(\cdot)\big)<\inf_{u(\cdot)\in\cU^w_{ad}}J\big(u(\cdot)\big)+\frac{1}{2}\ep.$$
 	Let  $\bar{X}(\cdot)$ be the corresponding state trajectory, and $\bar{Y}(\cdot)$ be the corresponding  observation process. 	Let
 	$$\tau_n=\inf\left\{t\ge 0:\;\int^t_0|\bar u(s)|^2ds>n^2\right\}.$$
 	  Define 
 	$$
 	\bar{u}^n(t)  = \bar{u}(t) 1_{t < \tau_n}, 
 	$$
 	
The following lemma  will be helpful to derive the  near optimality of the Problem (SLQ). 
 	
 	\begin{lemma}\label{ll21}
 		$\bar{u}^n(t) \in  \cU_{ad}^n $.
 	\end{lemma}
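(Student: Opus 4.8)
The claim is that the truncated control $\bar u^n(t) = \bar u(t)\,\mathfrak{1}_{t<\tau_n}$ lies in $\cU^n_{ad}$, so by the definition of $\cU^n_{ad}$ there are three things to check: measurability/adaptedness with respect to $\dbF^{\bar Y}$, the uniform pathwise bound $\int_0^T |\bar u^n(s)|^2\,ds \le n^2$, and square-integrability (which follows from the pathwise bound after taking expectations). The plan is to dispatch these in turn. First I would observe that $\tau_n$ is an $\dbF^{\bar Y}$-stopping time: since $\bar u(\cdot)$ is $\dbF^{\bar Y}$-adapted, the process $t\mapsto \int_0^t |\bar u(s)|^2\,ds$ is $\dbF^{\bar Y}$-adapted and continuous, hence $\{\tau_n \le t\} = \{\int_0^t |\bar u(s)|^2\,ds \ge n^2\} \in \cF^{\bar Y}_t$; therefore $\{t < \tau_n\}$ is $\cF^{\bar Y}_t$-measurable, and the product $\bar u(t)\mathfrak{1}_{t<\tau_n}$ is $\cF^{\bar Y}_t$-measurable for each $t$, giving $\bar u^n(t) \in \cF^{\bar Y}_t$ as required.

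Next I would verify the pathwise bound. On the event $\{\tau_n > T\}$ we have $\bar u^n \equiv \bar u$ on $[0,T]$ and, by definition of $\tau_n$, $\int_0^T |\bar u(s)|^2\,ds \le n^2$ (otherwise $\tau_n$ would be at most $T$), so the bound holds. On the event $\{\tau_n \le T\}$, for any $t\in[0,T]$,
\begin{equation*}
\int_0^t |\bar u^n(s)|^2\,ds = \int_0^{t\wedge \tau_n} |\bar u(s)|^2\,ds \le \int_0^{\tau_n} |\bar u(s)|^2\,ds \le n^2,
\end{equation*}
where the last inequality uses the continuity of $s\mapsto \int_0^s |\bar u(r)|^2\,dr$ together with the definition of $\tau_n$ as the first time this (continuous, nondecreasing) integral exceeds $n^2$; continuity forces the value at $\tau_n$ itself to be exactly $\le n^2$. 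In either case $\int_0^T |\bar u^n(s)|^2\,ds \le n^2$ holds for every $\omega$.

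Finally, the pathwise bound $\int_0^T|\bar u^n(s)|^2\,ds \le n^2$ trivially gives $\mathbb{E}\int_0^T |\bar u^n(s)|^2\,ds \le n^2 < \infty$, so $\bar u^n(\cdot)$ is square-integrable; combined with $\dbF^{\bar Y}$-adaptedness this shows $\bar u^n(\cdot)\in\cU^n_{ad}$. I do not expect a genuine obstacle here — the only point requiring a little care is that the relevant filtration for adaptedness is $\dbF^{\bar Y}$ (the observation filtration generated by $\bar Y$), not $\dbF^{Y^n}$ for some other control, so one should make explicit that $\tau_n$ and hence $\bar u^n$ are measurable with respect to the filtration generated by the \emph{same} observation process $\bar Y(\cdot)$ that accompanies $\bar u(\cdot)$; once that is noted the argument is routine.
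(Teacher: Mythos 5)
Your pathwise bound $\int_0^T |\bar u^n(s)|^2\,ds \le n^2$ is fine, but the adaptedness part contains a genuine gap, and it sits exactly where you declare the issue settled. You prove that $\bar u^n$ is adapted to $\dbF^{\bar Y}$, the filtration of the observation process generated under the \emph{original} control $\bar u$, and then assert that this is ``the relevant filtration.'' It is not. The membership condition $u(t)\in\cF^Y_t$ in the definition of $\cU^n_{ad}$ refers to the filtration generated by the observation process $Y$ that the candidate control itself produces; this self-referential dependence of $\dbF^Y$ on $u$ is stated explicitly right after the definition of $\cU_{ad}$ and is the central difficulty of the whole paper. Replacing $\bar u$ by $\bar u^n$ changes the state $X$, hence changes $Y$, hence changes the filtration, so adaptedness to $\dbF^{\bar Y}$ does not by itself give adaptedness to the new $\dbF^{Y}$.

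The missing step --- which is essentially the entire content of the paper's proof --- is to consider the state--observation system driven by $\bar u^n$ and note that, since $\bar u^n=\bar u$ on $[0,\tau_n)$, pathwise uniqueness gives $X(t)=\bar X(t)$ and $Y(t)=\bar Y(t)$ for $t<\tau_n$. Consequently $\tau_n$, which you correctly identified as an $\dbF^{\bar Y}$-stopping time, is also a stopping time for the new observation filtration, and on $\{t<\tau_n\}$ the value $\bar u^n(t)=\bar u(t)$ is determined by the common path $\bar Y|_{[0,t]}=Y|_{[0,t]}$, while on $\{t\ge\tau_n\}$ one simply has $\bar u^n(t)=0$. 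This identification is what allows the conclusion $\bar u^n(t)\in\cF^{Y}_t$ for the $Y$ associated with $\bar u^n$; without it the proposal does not establish membership in $\cU^n_{ad}$.
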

 	\begin{proof}
 		It is clear that $\int ^T_0 |\bar{u}^n(t)|^2dt \leq n^2$. Now we prove the adaptiveness. 
 		Note that for the following system 
 		$$\left\{\begin{array}{ccl}
 			dX(t)&=&\big(A(t) X(t)+B(t)\bar{u}^n(t)\big)dt+C(t)\bar{u}^n(t)dW(t),\\
 			dY(t)&=&H(t)X(t)dt+ d\widetilde{W}(t), \\
 			X(0)&=&x, \; Y(0) =0,
 		\end{array}\right.$$
 	we have	for $t<\tau_n$,  $X(t)=\bar X(t)$ and $Y(t)=\bar Y(t)$. Therefore, $\bar{u}^n(t)$ is $\cF^{Y}_t$-measurable.
 	\end{proof}
 	
Note that we have the following
\begin{equation}\label{eq0208a}
\limsup_{n\to \infty} J(u^n(\cdot))  \leq \lim_{n\to \infty} J(\bar{u}^n(\cdot))  =  J (\bar{u}(\cdot))  < \inf_{u(\cdot)\in\cU^w_{ad}}J\big(u(\cdot)\big)+\frac{1}{2}\epsilon,
\end{equation}
 therefore, for $n > m$, where $m$ is   sufficiently large, we have that 
\begin{equation}
J(u^n(\cdot)) \leq \inf_{u(\cdot)\in\cU^w_{ad}}J\big(u(\cdot)\big)+\epsilon.
\end{equation}
In summary, we have the following theorem. 
\begin{theorem}
Let Assumptions (A1-A2) hold. {\color{red}For any $\epsilon >0$, let $n=n(\ep)$ be such that $J(u^n(\cdot))  \leq J (\bar{u}(\cdot))+\frac{1}{2}\epsilon$
whose existence is guranteed by (\ref{eq0208a}). Then, $\bar u_{\epsilon} (\cdot)\equiv u^{n(\ep)}(\cdot)$ is an $\ep$-optimal control of the Problem (SLQ).
 	Furthermore,  $\bar u_{\epsilon} (\cdot)$    satisfies  the system  ( \ref{conds}-\ref{eq0722b}) with $K_c=n(\ep)$. }
\end{theorem}

\section{Conclusion}\label{se5}

In this paper, we studied  the  near optimal controls of LQ problem with partial observation in the weak formulation.   
Our work is an extension of Sun and Xiong \cite{SX}. As they stated in the conclusion of their article,  their  approach works 
only for a specific structure of the state equation.  Namely,  the diffusion of the state equation does not involve the state
and the control.   In contrast, in our model, the  control  is contained in the diffusion term,  and our approach is entirely
different from theirs.  More precisely,  we have  used a conditional argument to make the Girsanov transformation applicable to our
bounded setting. We also obtained a moment of order higher than 1 for the auxiliary process resulted from the Girsanov
transformation by using Fernique's theorem.  The methods are still valid for the case where the expectation of the state process appears in the diffusion term of the state equation. Nevertheless,  we have only obtained the  near optimal controls so far, and our method does not apply to the case of state-dependent diffusion terms.   We hope to report some relevant results about
that  case  in our future publications.

{\bf Acknowledgement}: We would like to thank an anonymous referee and the AE for the constuctive suggestions which improve this article substantially.

\end{document}